\newcommand{\T}{\mathbb{T}} %% torus
\newcommand{\C}{\mathbb{C}} %% complex numbers
\newcommand{\D}{\mathbb{D}} %% unit disk
\newcommand{\ints}{\mathbb{Z}} %% integers
\newcommand{\cpd}{\overline{\D^d}} %% closed polydisk
\newcommand{\al}{\alpha}
\newcommand{\ip}[2]{\langle #1, #2 \rangle}
\newcommand{\mcp}{\mathcal{P}}
\newcommand{\Sz}{\mathcal{S}}
\numberwithin{equation}{section}
\newtheorem{theorem}{Theorem}[section]
\newtheorem{prop}[theorem]{Proposition}
\newtheorem{lemma}[theorem]{Lemma}
\newtheorem{definition}[theorem]{Definition}
\newtheorem{notation}[theorem]{Notation}
\newtheorem{question}[theorem]{Question}
\theoremstyle{remark}
\title[Kernel decompositions for Schur functions]{Kernel decompositions for Schur functions on the polydisk}
\author{Greg Knese}
\address{University of California, Irvine, Irvine, CA 92697-3875}
\date{\today} 
\email{gknese@uci.edu} 
\keywords{Schur function,
  polydisk, polydisc, reproducing kernel, Agler decomposition, Pick
  interpolation}
\subjclass[2000]{Primary 47A57; Secondary 42B05}
\begin{document}
\bibliographystyle{plain}

\maketitle

\begin{abstract}
A certain kernel (sometimes called the Pick kernel) associated to
Schur functions on the disk is always positive semi-definite.  A
generalization of this fact is well-known for Schur functions on the
polydisk.  In this article, we show that the ``Pick kernel'' on the
polydisk has a great deal of structure beyond being positive
semi-definite.  It can always be split into two kernels possessing
certain shift invariance properties.
\end{abstract}

\section{Introduction}
Let $\D^d$ be the unit polydisk in $\C^d$.  A \emph{Schur function} is
simply a holomorphic function $f: \D^d \to \C$ bounded by one in
modulus.  One of the most fundamental facts about Schur functions in
one variable is that the following kernel is positive semi-definite:
\begin{equation} \label{schur-1var}
\frac{1-f(z) \overline{f(\zeta)}}{1-z\bar{\zeta}} \geq 0.
\end{equation}
(We say a function $K:\D^d \times \D^d \to \C$ is a positive
semi-definite kernel and write $K \geq 0$ if for every finite subset
$F \subset \D^d$, the matrix
\[
(K(z,\zeta))_{z,\zeta \in F}
\]
is positive semi-definite---to actually form a matrix we would need an
ordering of $F$, but this is unimportant).

The positive semi-definiteness of \eqref{schur-1var} is significant
because (1) it relates function theory to operator theory and (2) it
turns out to have a very strong converse: if $f$ is a function on a
finite subset of $\D$ such that \eqref{schur-1var} is positive
semi-definite on that finite set, then $f$ is the restriction of a
Schur function.  This is the content of the Pick interpolation
theorem. 

It is not clear what the ``best'' generalization of \eqref{schur-1var}
is to several variables.  For a Schur function in $d$ variables, it is
a fact that
\begin{equation} \label{schur-dvar}
\frac{1-f(z)\overline{f(\zeta)}}{\prod_{j=1}^d (1-z_j\bar{\zeta_j})} 
\end{equation}
is positive semi-definite, however this does not seem to be extremely
useful.  Here $z = (z_1,\dots, z_d), \zeta = (\zeta_1,\dots, \zeta_d)
\in \C^d$.

It was not until circa 1988 that a more useful result was given in two
variables by J.Agler \cite{jA88}: for any Schur function $f$ on $\D^2$
there exist positive semi-definite kernels $\Gamma_1, \Gamma_2: \D^2
\times \D^2 \to \C$ such that
\begin{equation} \label{agler-decomp}
1-f(z) \overline{f(\zeta)} = (1-z_1 \bar{\zeta_1})\Gamma_1(z,\zeta) +
(1-z_2 \bar{\zeta_2})\Gamma_2(z,\zeta).
\end{equation}
This formula, called an \emph{Agler decomposition}, does not
generalize to more variables in the way that its form suggests.  Schur
functions which satisfy
\begin{equation} \label{agler-decomp2}
1-f(z) \overline{f(\zeta)} = \sum_{j=1}^d (1-z_j\bar{\zeta_j})
\Gamma_j(z,\zeta)
\end{equation}
for some positive semi-definite kernels $\Gamma_1,\dots, \Gamma_d$,
form a proper subclass of the set of Schur functions called the
Schur-Agler class.  

Very recently, A. Grinshpan, D. Kaliuzhnyi-Verbovetskyi, V. Vinnikov,
and H. Woerdeman \cite{GKVW09} proved a decomposition that \emph{does}
hold in general and which is still analogous to
\eqref{agler-decomp}.  We state it in the scalar valued case but it
holds in the operator valued case as well.

\begin{theorem}[GKVW 2009 \cite{GKVW09}] \label{GKVWthm}
 Let $f: \D^d \to \D$ be holomorphic.  Then, for each $j \ne k \in
 \{1,\dots, d\}$ there exist positive semi-definite kernels $K$ and
 $K'$ such that
\[
1-f(z)\overline{f(\zeta)}= \prod_{r \ne j} (1-z_r\bar{\zeta_r}) K(z,z) + \prod_{r \ne k}
(1-z_r \bar{\zeta_r}) K'(z,z)
\]
\end{theorem}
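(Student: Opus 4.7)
The strategy is to reduce the $d$-variable problem to the $2$-variable operator-valued Agler decomposition by viewing $f$ as a holomorphic map of two variables into a space of multiplication operators. Without loss of generality take $j=1$ and $k=2$. Writing $z=(z_1,z_2,z')$ with $z'=(z_3,\ldots,z_d)$ and similarly for $\zeta$, I define
\[
g:\D^2\to B(H^2(\D^{d-2})), \qquad g(z_1,z_2) := M_{f(z_1,z_2,\cdot)},
\]
the operator of multiplication on $H^2(\D^{d-2})$ by the slice $f(z_1,z_2,\cdot)$. Since $\|f(z_1,z_2,\cdot)\|_{H^\infty(\D^{d-2})} \le \|f\|_{H^\infty(\D^d)} \le 1$, we have $\|g(z_1,z_2)\|\le 1$, so $g$ is an operator-valued Schur function on the bidisk.

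The next step is to apply the operator-valued version of Agler's two-variable decomposition (a consequence of the operator-valued Ando dilation for pairs of commuting contractions) to obtain positive semi-definite $B(H^2(\D^{d-2}))$-valued kernels $\Gamma_1,\Gamma_2$ on $\D^2\times\D^2$ with
\[
I - g(z_1,z_2)\, g(\zeta_1,\zeta_2)^* = (1-z_1\bar{\zeta}_1)\Gamma_1 + (1-z_2\bar{\zeta}_2)\Gamma_2.
\]
I would then extract a scalar identity by pairing with the reproducing kernels $k_{\zeta'}, k_{z'}$ of $H^2(\D^{d-2})$. The adjoint computation $g(\zeta_1,\zeta_2)^* k_{\zeta'} = \overline{f(\zeta)}\, k_{\zeta'}$ yields
\[
\bigl\langle [I-g(z_1,z_2)\, g(\zeta_1,\zeta_2)^*] k_{\zeta'}, k_{z'}\bigr\rangle = \frac{1-f(z)\overline{f(\zeta)}}{\prod_{r=3}^d(1-z_r\bar{\zeta}_r)},
\]
while $K_i(z,\zeta) := \langle \Gamma_i((z_1,z_2),(\zeta_1,\zeta_2))\, k_{\zeta'}, k_{z'}\rangle$ are scalar positive semi-definite kernels on $\D^d$, because matrix elements of an operator-valued PSD kernel against a fixed vector-valued function of the argument form a scalar PSD kernel. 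Multiplying this identity through by $\prod_{r=3}^d(1-z_r\bar{\zeta}_r)$ produces the desired decomposition, with $K=K_2$ paired with $\prod_{r\ne 1}(1-z_r\bar{\zeta}_r)$ and $K'=K_1$ paired with $\prod_{r\ne 2}(1-z_r\bar{\zeta}_r)$.

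The main obstacle is invoking the operator-valued two-variable Agler decomposition and verifying it applies to this particular multiplier function $g$; this is precisely where Ando's theorem on commuting contractions plays an essential role. Consistent with this, the approach cannot be iterated to produce three-or-more-term analogues, matching the known failure of \eqref{agler-decomp2} for general Schur functions on $\D^d$ when $d\ge 3$. Once the two-variable operator-valued decomposition is accepted, the remainder of the argument is a bookkeeping computation with reproducing kernels.
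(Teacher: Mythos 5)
Your proof is correct, but it takes a genuinely different route from the paper's. You reduce to the operator-valued two-variable Agler decomposition by slicing: view $f$ as a $B\bigl(H^2(\D^{d-2})\bigr)$-valued Schur function of $(z_1,z_2)$, apply the Agler/Ando decomposition on $\D^2$, and then compress against Szeg\H{o} kernels in the remaining $d-2$ variables. This is essentially the original argument of \cite{GKVW09} and its key input is Ando's dilation theorem for two commuting contractions (equivalently, the operator-valued Agler decomposition on the bidisk). The computation $M_{f(z_1,z_2,\cdot)}^* k_{\zeta'} = \overline{f(\zeta)}\,k_{\zeta'}$ and the observation that pairing an operator-valued positive kernel against a fixed vector-valued function of the index yields a scalar positive kernel are both sound, as is the final bookkeeping of Szeg\H{o} factors. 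By contrast, the paper deduces Theorem \ref{GKVWthm} as a one-line corollary of Theorem \ref{schurthm}: given a decomposition $\frac{1-f\overline{f}}{\prod (1-z_r\bar\zeta_r)} = K_S + L_T$ with $K_S$ being $S$-contractive and $L_T$ being $T$-contractive, and with $j\in S$, $k\in T$, one simply peels off one factor from each term, absorbing it into the kernel where the contractivity guarantees positivity. The paper's route is longer overall because it must build the Bernstein--Szeg\H{o} measure machinery, but it never invokes Ando's theorem and it yields substantially more structure than Theorem \ref{GKVWthm}: a whole lattice of $S$-contractive decompositions, uniqueness via maximality, and the inequalities $K_S \geq K_{S'}$. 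Your slicing argument proves exactly Theorem \ref{GKVWthm} and nothing more, and (as you note) it cannot be iterated to obtain stronger $d$-fold decompositions; so the two proofs genuinely differ in both method and in what they deliver.
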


It is our goal to strengthen this theorem and to alter the point of
view slightly.  Rather than looking for more decompositions analogous
to \eqref{agler-decomp}, we instead attempt to illuminate the
structure of the kernel in \eqref{schur-dvar}.

Before presenting our theorem we need the following
definition.

\begin{definition} 
If $K$ is a positive semi-definite kernel on $\D^d$, we shall say $K$
is $z_j$-contractive or just \emph{$j$-contractive} if
\[
(1-z_j\bar{\zeta_j})K(z,\zeta) \geq 0.
\]
If $S \subset \{1,\dots, d\}$, then we say a kernel $K$ is
$S$-contractive if it is $j$-contractive for all $j \in S$.
\end{definition}

\begin{theorem} \label{schurthm}
Let $d\geq 2$ and let $f: \D^d \to \D$ be holomorphic. Then, for each
nonempty $S \subsetneq \{1,2,\dots, d\}$, there exist positive
semi-definite $S$-contractive kernels $K_S, L_S$, such that if $S
\sqcup T = \{1,\dots, d\}$ is a nontrivial partition, then
\[
\frac{1-f(z)\overline{f(\zeta)}}{\prod_{j=1}^d (1-z_j\bar{\zeta_j})} =
K_S(z,\zeta) + L_T(z,\zeta),
\]
\[
K_T -L_T = K_S-L_S \geq 0,
\]
and if $S \subset S' \subset \{1,2,\dots, d\}$ then
\[
K_S \geq K_{S'}.
\]
\end{theorem}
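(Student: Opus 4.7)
My plan is to start from Theorem~\ref{GKVWthm} and strengthen it to extract the full family $\{K_S, L_S\}$, using the reproducing-kernel Hilbert space associated to the Pick kernel
\[
P(z,\zeta) = \frac{1-f(z)\overline{f(\zeta)}}{\prod_r(1-z_r\bar{\zeta_r})}
\]
as the main organizing structure.

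First, I would divide both sides of Theorem~\ref{GKVWthm} by $\prod_r(1-z_r\bar{\zeta_r})$, which is the Szeg\H{o} kernel on $\D^d$ and therefore positive semi-definite. A Schur-product argument using the positive semi-definite factors $(1-z_r\bar{\zeta_r})^{-1}$ shows that for each $j \ne k$ there exist positive semi-definite kernels $\Phi$ and $\Psi$ on $\D^d$, with $\Phi$ being $j$-contractive and $\Psi$ being $k$-contractive, satisfying $P = \Phi + \Psi$. This handles the $|S|=1$ case of Theorem~\ref{schurthm} (taking $L_S := K_S$, so that $M \equiv 0$), and together with Agler's decomposition it yields the full $d = 2$ case, where the monotonicity condition is vacuous.

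For $d \ge 3$ and a general proper nonempty $S$, I would try to construct $K_S$ either by induction on $d$ (freezing one coordinate and invoking the inductive hypothesis on the remaining variables) or by induction on $|S|$ (peeling off one coordinate at a time using GKVW). A third alternative would be to identify $K_S$ with the reproducing kernel of the subspace of $\mathcal{H}(P)$ on which each $M_{z_j}^*$, $j \in S$, acts as a contraction, extracted via a Wold-type decomposition of the commuting tuple $(M_{z_1}^*,\ldots,M_{z_d}^*)$ restricted to $\mathcal{H}(P)$. Throughout I would aim to arrange $L_S = K_S$, which forces the common difference $M = K_S - L_S$ to vanish and reduces the consistency conditions to the single family of identities $P = K_S + K_T$ indexed by partitions.

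The main obstacle, as I see it, is obtaining \emph{simultaneous} $S$-contractivity rather than contractivity in only a single coordinate. A single application of Theorem~\ref{GKVWthm} gives only one-coordinate contractivity, and naive combinations of several GKVW decompositions tend to destroy either the partition identity $P = K_S + L_T$ or the monotonicity $K_S \ge K_{S'}$. Resolving this will likely require either a genuinely multi-coordinate strengthening of Theorem~\ref{GKVWthm} (perhaps obtainable by revisiting its original proof) or a canonical operator-theoretic construction that produces the whole family in one shot. The coupling imposed by monotonicity between different subsets strongly suggests an extremal or canonical choice of the $K_S$, and I expect the hardest step to be verifying that such a choice exists and satisfies $S$-contractivity in every coordinate of $S$.
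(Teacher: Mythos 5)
The proposal is a plan with the central difficulty explicitly left open, not a proof. You correctly identify the obstacle--- obtaining simultaneous $S$-contractivity rather than single-coordinate contractivity--- but you then say only that resolving it ``will likely require'' a stronger version of Theorem~\ref{GKVWthm} or ``a canonical operator-theoretic construction,'' and that you ``expect the hardest step to be verifying that such a choice exists.'' That hardest step \emph{is} the theorem; without it you have not proved anything beyond the $|S|=1$ case that follows from \cite{GKVW09}.

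Two of your specific intermediate claims are also problematic. First, the aim to arrange $L_S = K_S$, which would reduce the conclusion to $P = K_S + K_T$ over all partitions, does not appear to be achievable. In the paper's construction (which characterizes $K_S$ uniquely as the \emph{maximal} $S$-contractive kernel dominated by $\mcp$) one has $K_S + K_T - \mcp$ equal to the reproducing kernel of a genuinely nontrivial subspace, so $K_S + K_T > \mcp$; the quantity $K_S - L_S = K_T - L_T$ is nonzero and is exactly this excess. You would need a smaller, non-canonical choice of $K_S$ to get $K_S + K_T = \mcp$, and it is unclear how to make such a choice simultaneously $S$-contractive and consistent across all partitions. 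Second, the ``Wold-type decomposition'' idea is underspecified: on any subspace $N \subset \mathcal{H}(P)$ the compression of $M_{z_j}^*$ to $N$ is automatically a contraction, so that does not single out anything. The correct operator-theoretic condition is that the subspace be invariant under multiplication by $z_j$ (equivalently, $M_{z_j}^*$-coinvariant), which is exactly what makes its reproducing kernel $j$-contractive (Lemma~\ref{lem-jcont}).

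The paper's route is quite different from anything you sketch and is worth internalizing: it proves the result first for rational inner functions $f = \tilde{p}/p$ by working in the concrete Hilbert space $L^2(|p|^{-2}d\sigma)$, where the Pick kernel $\mcp$ is identified as the reproducing kernel of the Fourier-support subspace $L^2_{\mu}(B)$. The decomposition $\mcp = K_S + L_T$ then comes from a nontrivial \emph{orthogonality} of subspaces $L^2_{\mu}(X_T)$ and $L^2_{\mu}(X_S \ominus (X_S\cap X_T))$ (Proposition~\ref{prop-orth}), and $S$-contractivity comes from verifying that these subspaces are closed under multiplication by the appropriate variables (Propositions~\ref{prop-inv0}, \ref{prop-inv}). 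Maximality of $K_S$ is then automatic from Proposition~\ref{prop-inv0} and Lemma~\ref{lem-dom}, which is what delivers the monotonicity $K_S \geq K_{S'}$. The general Schur function case is obtained by Rudin's approximation theorem and a normal-families limit argument; you would need to add such an approximation step (or some other mechanism) to bridge from any constructive special case to general $f$, and your plan does not address this at all.
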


Kernel inequalities like the last line should be interpreted as saying
$K_S - K_{S'}$ is positive semi-definite.

The proof of Theorem \ref{GKVWthm} in \cite{GKVW09} amounts to the
case where $S$ is a singleton, however many of the decompositions
provided by Theorem \ref{schurthm} can be used to reprove Theorem
\ref{GKVWthm}.

Indeed, let $S\sqcup T = \{1,\dots, d\}$ be any partition with $j \in
S$ and $k \in T$.  Theorem \ref{GKVWthm} follows from writing as in
Theorem \ref{schurthm}
\begin{multline*}
1-f(z) \overline{f(\zeta)} = \prod_{r \ne j}(1-z_r\bar{\zeta_r})
(\underset{K(z,\zeta)}{\underbrace{(1-z_j \bar{\zeta_j})K_S(z,\zeta))}} \\
+ \prod_{r \ne
  k}(1-z_r\bar{\zeta_r}) (\underset{K'(z,\zeta)}{\underbrace{(1-z_k\bar{\zeta_k})
    L_T(z,\zeta))}}
\end{multline*}
and $K$ and $K'$ are positive since $K_S$ is $j$-contractive and $L_T$
is $k$-contractive.

\iffalse
The scheme presented in Theorem \ref{schurthm} allows us to look at
Agler decompositions \eqref{agler-decomp2} in a different light.  For
a Schur function to have a decomposition as in \eqref{agler-decomp2}
(i.e. to be in the Schur-Agler class), we would need to be able to
write
\begin{equation} \label{ques-eq1}
\frac{1-f(z)\overline{f(\zeta)}}{\prod_{j=1}^d (1-z_j\bar{\zeta_j})} =
\sum_{j=1}^d \Delta_j(z,\zeta)
\end{equation}
where each $\Delta_j \geq 0$ is not only $(\{1,\dots,
d\}\setminus\{j\})$-contractive but satisfies the much stronger
condition that
\begin{equation} \label{strongcond}
\prod_{k\ne j}(1-z_k\bar{\zeta_k}) \Delta_j(z,\zeta) \geq 0.
\end{equation}
This is just a matter of setting
\[
\Delta_j(z,\zeta) = \frac{\Gamma_j(z,\zeta)}{\prod_{k\ne
    j}(1-z_k\bar{\zeta_k})}
\]
in \eqref{agler-decomp2}, but we hope the different viewpoint at least
heuristically suggests that being in the Schur-Agler class requires
several miracles to occur.  In this regard we ask the following
question.

\begin{question} Does every Schur function $f$ on $\D^d$ have 
a decomposition as in \eqref{ques-eq1} where each $\Delta_j \geq 0$ is
$(\{1,\dots, d\}\setminus\{j\})$-contractive? (Of necessity, they
cannot all satisfy \eqref{strongcond}.)
\end{question}
\fi

Our proof of Theorem \ref{schurthm} relies on proving the result first
for rational inner functions continuous on $\cpd$; these can be
characterized as follows.

 Let $p \in \C[z] = \C[z_1,\dots, z_d]$ have no zeros on the closed
 polydisk $\cpd$ and suppose $\deg{p} \leq n = (n_1,\dots,n_d)$.
 Define
\begin{equation} \label{def-ptilde}
\tilde{p}(z) := z^n \overline{p(1/\bar{z})} = z_1^{n_1}\cdots z_d^{n_d}
\overline{p(1/\bar{z_1},\dots, 1/\bar{z_d})}
\end{equation}
(and notice $|p| = |\tilde{p}|$ on the $d$-torus $\T^d$).

Every regular rational inner function can be represented as $f(z) =
\tilde{p}(z)/p(z)$ for some choice of $p$ and some choice of $n \geq
\deg(p)$ as above (see Rudin \cite{wR69} Theorem 5.2.5).  We state a
theorem below describing the structure of the following kernel
\[
\mcp (z,\zeta) := \frac{p(z)\overline{p(\zeta)} -
  \tilde{p}(z)\overline{\tilde{p}(\zeta)}}{ \prod_{j=1}^d
  (1-z_j\bar{\zeta_j})},
\]
a trivial modification of \eqref{schur-dvar} in the case of $f =
\tilde{p}/p$.

First, we need another definition.

\begin{definition} \label{def-pkernel}
Let us call $K(z,\zeta):\D^d\times \D^d \to \C$ a \emph{$\mcp$-kernel}
if
\begin{itemize}
\item $\mcp \geq K \geq 0$ in the sense of kernels and
\item whenever $\mcp (z,\zeta) \geq f(z) \overline{f(\zeta)}$ and
  $K(z,\zeta) \geq \epsilon f(z)\overline{f(\zeta)}$ for some
  $\epsilon>0$, then we necessarily have $K(z,\zeta) \geq f(z)
  \overline{f(\zeta)}$.
\end{itemize}
\end{definition}

See Lemma \ref{lem-pkernel} in the Appendix for a precise description
of what this means.  The (aesthetic) point here is that we have a
theorem which does not refer to our methods of proof.  The follow
theorem is similar to Theorem \ref{schurthm} but more precise.

\begin{theorem} \label{mainthm}
  Let $p \in \C[z]$ be as above.  For every nonempty $S \subsetneq
  \{1,2,\dots, d\}$, there exist $S$-contractive $\mcp$-kernels $K_S,
  L_S$, such that if $S \sqcup T = \{1,\dots, d\}$ is a nontrivial
  partition, then
\[
\mcp = K_S + L_T.
\]
Moreover, $K_S$ is maximal among all $S$-contractive kernels bounded
above by $\mcp$.
\end{theorem}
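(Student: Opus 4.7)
The plan is to reduce Theorem \ref{mainthm} to the polynomial identity
\begin{equation*}
p(z)\overline{p(\zeta)} - \tilde{p}(z)\overline{\tilde{p}(\zeta)} = \prod_{k\in T}(1-z_k\bar{\zeta_k})\,\Gamma_S(z,\zeta) + \prod_{j\in S}(1-z_j\bar{\zeta_j})\,\Gamma_T(z,\zeta) \tag{$\star$}
\end{equation*}
in which $\Gamma_S,\Gamma_T$ are positive semi-definite polynomial kernels, valid for every nontrivial partition $S\sqcup T=\{1,\dots,d\}$. Given $(\star)$, dividing by $\prod_r(1-z_r\bar{\zeta_r})$ produces a candidate decomposition $\mcp = K^\circ + L^\circ$ with
\begin{equation*}
K^\circ := \Gamma_S \prod_{j\in S}\frac{1}{1-z_j\bar{\zeta_j}}, \qquad L^\circ := \Gamma_T \prod_{k\in T}\frac{1}{1-z_k\bar{\zeta_k}}.
\end{equation*}
Both summands are Schur products of positive semi-definite kernels and are therefore PSD. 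Moreover, multiplying $K^\circ$ by $(1-z_j\bar{\zeta_j})$ for $j\in S$ cancels the $j$-th factor in the denominator and leaves a Schur product of PSD kernels, so $K^\circ$ is $S$-contractive; symmetrically, $L^\circ$ is $T$-contractive.

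The next step is to identify $K^\circ$ as the maximal $S$-contractive sub-kernel $K_S$ of $\mcp$. Existence of the maximum follows from a compactness/Zorn argument on the convex family of $S$-contractive PSD kernels bounded above by $\mcp$. The inequality $K_S\geq K^\circ$ is immediate from maximality. For the reverse, I would work in the reproducing kernel Hilbert space $H(\mcp)$: any $S$-contractive sub-kernel $K\leq \mcp$ corresponds to a subspace of $H(\mcp)$ on which each coordinate multiplier $M_{z_j}$ (for $j\in S$) is contractive, and a direct comparison with the decomposition $(\star)$ shows this subspace is contained in $H(K^\circ)$. The $\mcp$-kernel condition of Definition \ref{def-pkernel} then follows from the extremality of $K_S$: a rank-one $f\bar{f}\leq \mcp$ which is partially absorbed by $K_S$ (i.e., $K_S \geq \epsilon f\bar{f}$ for some $\epsilon>0$) must be fully absorbed, by the rigidity of $(\star)$.

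The bulk of the work, and the main obstacle, is establishing $(\star)$. The case $d=2$ is the Cole-Wermer / Geronimo-Woerdeman sum-of-squares theorem for polynomials with no zeros on $\cbi$, and the cases $|T|=1$ or $|S|=1$ amount to polynomial refinements of Theorem \ref{GKVWthm} for rational inner functions. For arbitrary partitions and $d\geq 3$ I would attempt a lurking-isometry construction: realize $\tilde{p}/p$ via a unitary colligation whose state space splits as $\mathcal{H}_S\oplus\mathcal{H}_T$ compatibly with the partition, with the shifts in the $S$-variables acting on the first block and those in the $T$-variables on the second, so that the two blocks of the resulting transfer-function formula yield $\Gamma_S$ and $\Gamma_T$. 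Alternatively, one could proceed inductively on $\min(|S|,|T|)$, starting from the case $|T|=1$ and iteratively splitting variables, using the expected nesting $K_S \geq K_{S'}$ for $S\subset S'$ as a consistency condition across the induction.
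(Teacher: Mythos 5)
Your proposal takes a genuinely different route from the paper: rather than analyzing orthogonality inside the Bernstein--Szeg\H{o} space $L^2(\mu)$ with $d\mu = |p|^{-2}\,d\sigma$, you try to deduce the theorem from a polynomial sum-of-squares identity $(\star)$ together with formal kernel manipulations. The reduction from $(\star)$ to a decomposition $\mcp = K^\circ + L^\circ$ with $K^\circ$ $S$-contractive and $L^\circ$ $T$-contractive is fine. The problem is that $(\star)$ is not proved, and in the rational inner setting $(\star)$ \emph{is} the theorem. You invoke Cole--Wermer/Geronimo--Woerdeman for $d=2$ and GKVW for $|S|=1$, but for $d\geq 3$ and $1 < |S| < d-1$ you only name two possible strategies. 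The lurking-isometry strategy would require a unitary colligation whose state space splits along the prescribed partition $S\sqcup T$; nothing guarantees such a splitting, and indeed the failure of the na\"ive Agler decomposition for $d\geq 3$ is precisely the obstruction to splitting a colligation coordinate by coordinate. (The GKVW argument for $|S|=1$ is tailored to singleton $S$ and does not obviously iterate.) The inductive strategy is likewise only named; it is not clear what mechanism lets you peel a variable out of $T$ while keeping the residual kernel positive and the already-split part intact. Until $(\star)$ is established, the argument is circular.

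Two further points are under-argued. First, a Zorn-type argument on the convex set of $S$-contractive PSD kernels $\leq \mcp$ yields a \emph{maximal} element, not a \emph{greatest} one; you need a separate reason the class has a unique top. In the paper this is free because $K_S$ is the reproducing kernel of $L^2_{\mu}(X_T)$, the maximal subspace of $L^2_{\mu}(B)$ closed under multiplication by $z_j$, $j\in S$ (Proposition~\ref{prop-inv0}), so maximality is a statement about Fourier supports, not a kernel-theoretic extremal principle. Second, the claim that ``a direct comparison with $(\star)$'' shows $H(K)\subset H(K^\circ)$ for every $S$-contractive $K\leq\mcp$ is unsubstantiated: $(\star)$ is far from unique, so a fixed choice of $\Gamma_S$ cannot by itself dominate every competitor; and for $K\leq\mcp$ that is not a $\mcp$-kernel, $H(K)$ sits inside $H(\mcp)$ only contractively, so the $M_{z_j}$-contractivity on $H(K)$ does not transfer to the ambient norm the way your sketch suggests. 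The paper's route through Lemmas~\ref{lem-sum}, \ref{lem-shift}, \ref{lem-dom} handles exactly this. Finally, the $\mcp$-kernel property of Definition~\ref{def-pkernel} is dispatched by ``rigidity of $(\star)$'' with no actual argument; in the paper it comes from identifying $K_S$ and $L_S$ as reproducing kernels of closed subspaces of $L^2_{\mu}(B)$ (Lemma~\ref{lem-pkernel}).
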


This last condition makes these decompositions unique.

\section{The kernel $\mcp$}
The theorems from the introduction are proved by analyzing
orthogonality relations for a ``Bernstein-Szeg\H{o} measure'':
\begin{equation} \label{def-mu}
d\mu = \frac{1}{|p (z) |^2} d\sigma(z)
\end{equation}
where $d\sigma$ is normalized Lebesgue measure on the $d$-torus $\T^d$
and $p \in \C [z]$ has no zeros on the closed polydisk $\cpd$.  We
also use $d\sigma$ to represent normalized Lebesgue measure on
different dimensional tori, and the dimension will be made apparent by
the variable; e.g. $d\sigma(z_1)$ corresponds to normalized Lebesgue
measure on $\T$ using the variable $z_1$.

Notice that the complex Hilbert space $L^2(\mu)$ is a renorming of
$L^2(\T^d)$ and therefore is topologically isomorphic.  The inner
product on $L^2(\mu)$ is denoted
\[
\ip{f}{g}_{\mu} = \int_{\T^d} f(z)\overline{g(z)} d\mu(z).
\]

For a subset $X$ of the lattice $\ints^d$ we define the closed
subspace
\[
L^2_\mu(X) := \{ f \in L^2(\mu): \hat{f}(\al) = 0 \text{ for } \al
\notin X\}
\]
where $\hat{f}(\al)$ denotes the $\al$-th Fourier coefficient of $f$
(and note we typically use $\al=(\al_1,\dots, \al_d)$ to denote a
$d$-tuple of integers).  We use the following non-traditional
notation.  If $Y \subset X \subset \ints^d$ then we write
\begin{equation} \label{note-comple}
L^2_{\mu}(X\ominus Y) := L^2_{\mu}(X) \ominus L^2_{\mu}(Y).
\end{equation}

We use the following partial order on $d$-tuples of integers $\al =
(\al_1,\dots,\al_d), \beta=(\beta_1,\dots, \beta_d)$:
\[
\al \leq \beta \text{ if and only if } \al_j \leq \beta_j \text{ for
  all } j =1,\dots, d;
\]
$n = (n_1,\dots, n_d)$ is a fixed $d$-tuple which bounds the
multi-degree of $p$ (i.e. the degree of $p$ with respect to $z_j$ is
at most $n_j$); writing $\al < \beta$ means $\al \leq \beta$ and $\al
\ne \beta$.

We typically write elements of $\C^d$ with $z = (z_1,\dots,
z_d)$.  We use multi-index notation:
\[
z^\al := z_1^{\al_1} \cdots z_d^{\al_d}
\]
for $\al \in \ints^d$ and $z \in \C^d$.

We need to define various subsets of $\ints^d$:
\begin{align}
\ints_{+}^d &:= \{\al \in \ints^d: \al\geq 0\} \nonumber \\
\ints_{n+}^d &:= \{\al \in \ints^d: \al \geq n\} \nonumber \\
B &:= \ints_{+}^d \setminus \ints_{n+}^d = \{ \al \in \ints_{+}^d:
\exists j: \al_j < n_j \} = \{\al \in \ints_{+}^d: \al \ngeq n\} \label{def-B}
\end{align}

Then, for example $L^2_\mu(\ints_+^d)$ denotes the closure of the
polynomials with respect to $L^2(\mu)$, a space equal to the Hardy
space $H^2(\T^d)$ although it has a different inner product.

The first thing we prove provides the connection to the kernel $\mcp$.
See \cite{AM02} for background on reproducing kernel Hilbert spaces.
The \emph{Szeg\H{o} kernel} will be denoted:
\[
\Sz_d(z,\zeta) = \prod_{j=1}^{d} \frac{1}{(1-z_j\bar{\zeta_j})}.
\]
As $H^2(\T^d)$ is a reproducing kernel Hilbert space kernel $\Sz_d$
and since $L^2_{\mu}(\ints_{+}^d)$ is a renorming of $H^2(\T^d)$,
$L^2_{\mu}(\ints_{+}^d)$ and all of its closed subspaces are also
reproducing kernel Hilbert spaces.

\begin{prop} \label{prop-kernel} 
Let $p \in \C[z]$ have degree at most $n$, let $\tilde{p}(z) =
z^n\overline{p(1/\bar{z})}$, and let
\[
d\mu = \frac{1}{|p (z) |^2} d\sigma(z).
\]
Then, with $B$ as in \eqref{def-B} the reproducing kernel for
$L^2_\mu(B)$ is
\[
\mcp(z,\zeta) = (p(z)\overline{p(\zeta)} -
\tilde{p}(z)\overline{\tilde{p}(\zeta)}) \Sz_d(z,\zeta).
\]
\end{prop}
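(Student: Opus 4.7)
The plan is to verify directly that $\mcp(\cdot,\zeta)$ satisfies the two defining properties of the reproducing kernel for $L^2_\mu(B)$: that $\mcp(\cdot,\zeta) \in L^2_\mu(B)$, and that $\langle f,\mcp(\cdot,\zeta)\rangle_\mu = f(\zeta)$ for every $f \in L^2_\mu(B)$. Uniqueness of the reproducing kernel then finishes the proof. A warning: one might hope that $L^2_\mu(B)$ coincides with the $\mu$-orthogonal complement of $L^2_\mu(\ints_{n+}^d)$ in $L^2_\mu(\ints_{+}^d)$, but these differ in general, so the direct verification really must be done.

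For membership: $\mcp(\cdot,\zeta) \in H^\infty(\T^d) \subset L^2(\mu)$ for each $\zeta \in \D^d$, so I only need to check its Fourier support in $z$. Both summands $p(z)\overline{p(\zeta)}\Sz_d(z,\zeta)$ and $\tilde p(z)\overline{\tilde p(\zeta)}\Sz_d(z,\zeta)$ are analytic in $z$, so they already have Fourier support in $\ints_+^d$; the real content is killing the coefficient of $z^\gamma$ for $\gamma \geq n$. Writing $p(z) = \sum_{0 \le \beta \le n} p_\beta z^\beta$ and expanding $p(z)\Sz_d(z,\zeta) = \sum_{\alpha,\beta} p_\beta z^{\alpha+\beta}\bar\zeta^\alpha$, the coefficient of $z^\gamma$ for $\gamma \geq n$ is $\bar\zeta^{\gamma-n}\sum_{0\le\beta\le n} p_\beta\bar\zeta^{n-\beta} = \bar\zeta^{\gamma-n}\,\overline{\tilde p(\zeta)}$, the last equality following directly from definition \eqref{def-ptilde}. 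The analogous calculation for $\tilde p(z)\Sz_d(z,\zeta)$ produces $\bar\zeta^{\gamma-n}\,\overline{p(\zeta)}$, so after multiplying by the prefactors $\overline{p(\zeta)}$ and $\overline{\tilde p(\zeta)}$ respectively, the two contributions to $\mcp$ cancel.

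For the reproducing identity I would split
\[
\langle f,\mcp(\cdot,\zeta)\rangle_\mu = p(\zeta)\int_{\T^d}\frac{f(z)}{p(z)}\overline{\Sz_d(z,\zeta)}\,d\sigma(z) - \tilde p(\zeta)\int_{\T^d}\frac{f(z)\overline{\tilde p(z)}}{|p(z)|^2}\overline{\Sz_d(z,\zeta)}\,d\sigma(z).
\]
Since $p$ has no zeros on $\cpd$, $1/p \in H^\infty(\T^d)$ and $f/p \in H^2(\T^d)$, so the first integral evaluates to $(f/p)(\zeta)$ by the standard Szeg\H{o} reproducing formula; multiplying by $p(\zeta)$ yields $f(\zeta)$. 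For the second integral, the torus identity $\overline{\tilde p(z)} = \bar z^n p(z)$ (from $\tilde p(z) = z^n\overline{p(z)}$ on $\T^d$) rewrites the integrand as $g(z)\overline{\Sz_d(z,\zeta)}$ where $g(z) := f(z)/(z^n\overline{p(z)})$, so the integral equals $\sum_{\gamma \ge 0}\hat g(\gamma)\zeta^\gamma$. A Fourier-support tally now forces every such $\hat g(\gamma)$ to vanish: $f$ has support in $B$, $1/\overline{p(z)}$ has support in $-\ints_+^d$ (since $1/p$ is analytic on $\D^d$), and multiplication by $z^{-n}$ shifts by $-n$; a nonzero contribution with $\gamma \ge 0$ would require some $\alpha \in B$ with $\alpha \ge n$, which is impossible by definition of $B$. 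Hence the second integral is zero.

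The main obstacle is the Fourier bookkeeping in both steps. Neither computation is deep individually, but both crucially use the reversal symmetry encoded in $\tilde p$: step one needs the identity $\sum_{0 \le \beta \le n} p_\beta\bar\zeta^{n-\beta} = \overline{\tilde p(\zeta)}$ to produce the cancellation above index $n$, and step two needs the torus identity $\overline{\tilde p} = p/z^n$ to exhibit the second integrand as having no analytic part supported on $\ints_+^d$. These two combinatorial identities are what make the specific form $p\overline{p} - \tilde p\overline{\tilde p}$ (as opposed to, say, the simpler $(1 - z^n\bar\zeta^n)p\overline{p}$) arise.
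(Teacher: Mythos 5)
Your proof is correct and takes a genuinely different route from the paper's. The paper organizes the argument around the orthogonal decomposition $L^2_\mu(\ints_{+}^d) = L^2_\mu(B) \oplus L^2_\mu(\ints_{+}^d \ominus B)$: it first computes that $p(z)\overline{p(\zeta)}\Sz_d(z,\zeta)$ reproduces $L^2_\mu(\ints_{+}^d)$, then proves a separate lemma identifying $L^2_\mu(\ints_{+}^d \ominus B)$ with $\tilde p\,L^2_\mu(\ints_{+}^d)$ (this requires \emph{equality}, not just inclusion, and hence an inductive ``conversely'' argument), computes the kernel of that subspace, and subtracts. You bypass the decomposition entirely: you verify membership $\mcp(\cdot,\zeta)\in L^2_\mu(B)$ by hand via the cancellation of coefficients above index $n$, and verify reproduction by splitting the inner product; the vanishing of your second integral is, after unwinding $\overline{\Sz_d}=\sum_{\gamma\geq 0}\bar z^\gamma\zeta^\gamma$, exactly the orthogonality $\ip{f}{z^\gamma\tilde p}_\mu = 0$ for $f\in L^2_\mu(B)$ that the paper also derives, but you only need the ``easy'' containment direction of it. Your route is more self-contained and slightly leaner for this one proposition; the paper's route front-loads the identification $L^2_\mu(\ints_{+}^d\ominus B)=\tilde p\,L^2_\mu(\ints_{+}^d)$ precisely because it is reused later (e.g.\ in the proof of Proposition \ref{prop-orth}, where $B_\zeta\in\tilde p H^2(\T^d)=L^2_\mu(\ints_{+}^d\ominus B)$ is invoked). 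One small point worth making explicit in your second step: the triple convolution supporting your Fourier tally for $g=f\bar z^n/\overline{p}$ converges because $1/p$ is holomorphic on a neighborhood of $\cpd$, so $\widehat{1/\overline{p}}\in\ell^1$ while $\hat f\in\ell^2$; with that said, the support argument is airtight.
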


\begin{proof} The kernel for $L^2_\mu(\ints_{+}^{d})$
 is $p(z)\overline{p(\zeta)} \Sz_d(z,\zeta)$.  This is a simple
 computation; if $f \in H^2(\T^d)$ and $\zeta \in \D^d$ then
\[
\begin{aligned}
\int_{\T^d} f(z) \overline{p(z) \overline{p(\zeta)}
  \Sz_d(z,\zeta)} d\mu(z) &= \int_{\T^d} f(z)
\overline{p(z)}p(\zeta)\overline{\Sz_d(z,\zeta)}
\frac{d\sigma(z)}{|p(z)|^2} \\
&= \int_{\T^d} \frac{f(z)}{p(z)} p(\zeta) \overline{\Sz_d(z,\zeta)} d\sigma(z)
\\
&= \frac{f(\zeta)}{p(\zeta)}p(\zeta) = f(\zeta)
\end{aligned}
\]
The third equality is the reproducing property of $\Sz_d$ (or just the
Cauchy integral formula).

 We prove in Lemma \ref{lem-tilde} below that $L^2_\mu(\ints^d_{+}
 \ominus B)=\tilde{p}L^2_{\mu}(\ints_{+}^d)$ and a computation similar
 to that above proves that the reproducing kernel of
 $\tilde{p}L^2_{\mu}(\ints_{+}^d)$ is $\tilde{p}(z)
 \overline{\tilde{p}(\zeta)} \Sz_d(z,\zeta)$.  The result then follows
 from the fact that:
\[
L^2_{\mu}(\ints_{+}^d) = L^2_{\mu}(B) \oplus L^2_\mu(\ints^d_{+}
 \ominus B)
\]
and that the reproducing kernel of a direct sum is the sum of the
reproducing kernels of each direct summand. Namely,
\[
\underset{\text{kernel for }
  L^2_{\mu}(\ints_{+}^d)}{\underbrace{p(z)\overline{p(\zeta)}
    \Sz_d(z,\zeta)}} - \underset{\text{kernel for }
  L^2_\mu(\ints^d_{+} \ominus B)}{\underbrace{\tilde{p}(z)
    \overline{\tilde{p}(\zeta)} \Sz_d(z,\zeta)}} = \text{kernel for }
L^2_{\mu}(B).
 \]

\end{proof}

The following lemma was used above.

\begin{lemma} \label{lem-tilde} 
\[
\tilde{p} L^2_\mu(\ints_{+}^d) = L^2_\mu(\ints^d_{+} \ominus B) =
L^2_\mu(\ints^d\ominus(\ints^d \setminus \ints^d_{n+}))
\]
\end{lemma}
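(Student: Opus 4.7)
The plan is to exploit the pointwise identity $\tilde p = z^n \overline{p}$ on $\T^d$, equivalently $\tilde p/\bar p = z^n$, which converts $\mu$-inner products (with their $1/|p|^2$ weight) into ordinary $\sigma$-inner products; after that, both equalities reduce to Fourier-support bookkeeping on $\ints^d$.

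For the first equality, $\tilde p$ is a polynomial with Fourier support in $\{0,\ldots,n\}$, so $\tilde p L^2_\mu(\ints_+^d)\subset L^2_\mu(\ints_+^d)=H^2(\T^d)$, and this subspace is closed in $L^2(\mu)$ because $g\mapsto \tilde p g$ is an isometry from $L^2(\sigma)$ into $L^2(\mu)$ (as $|\tilde p|=|p|$ on $\T^d$). I would then identify its $\mu$-orthocomplement inside $H^2$ directly: for $g,h\in H^2$, using $\overline{\tilde p}/|p|^2=1/(z^n\bar p)$ on $\T^d$,
\[
\langle h,\tilde p g\rangle_\mu=\int_{\T^d}h\,\overline{\tilde p g}\,\frac{d\sigma}{|p|^2}=\int_{\T^d}(h/z^n)\overline{(g/p)}\,d\sigma=\langle h/z^n,\,g/p\rangle_\sigma.
\]
Because $1/p\in H^\infty$, the map $g\mapsto g/p$ is a bijection of $H^2$ onto itself, so vanishing of this inner product for every $g$ is equivalent to $h/z^n\perp_\sigma H^2$, i.e.\ $\hat h(\beta)=0$ for every $\beta\geq n$; combined with $h\in H^2$ this says precisely $h\in L^2_\mu(B)$. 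Thus $(\tilde p H^2)^{\perp_\mu}\cap H^2=L^2_\mu(B)$, proving the first equality.

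For the second equality, the identical calculation shows $\tilde p H^2\perp_\mu L^2_\mu(\ints^d\setminus\ints_{n+}^d)$, since the only fact about $h$ that was used is that $\hat h$ vanishes on $\ints_{n+}^d$. For the reverse direction, I would prove the direct-sum decomposition $L^2(\mu)=\tilde p H^2\oplus L^2_\mu(\ints^d\setminus\ints_{n+}^d)$: given $f\in L^2(\mu)$, set $k=f/\tilde p\in L^2(\sigma)$ (legitimate since $|\tilde p|=|p|$ is bounded below on $\T^d$) and split $k=g+h$ with $g\in H^2$ and $h\perp_\sigma H^2$. Then $\tilde p g\in\tilde pH^2$, and for any $\beta\geq n$,
\[
\widehat{\tilde p h}(\beta)=\sum_{\gamma\in[0,n]}\widehat{\tilde p}(\gamma)\,\hat h(\beta-\gamma)=0,
\]
because each $\beta-\gamma\in\ints_+^d$ while $\hat h$ vanishes on $\ints_+^d$; hence $\tilde p h\in L^2_\mu(\ints^d\setminus\ints_{n+}^d)$. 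The only real obstacle throughout is the mismatch between $\mu$-orthogonality and $\sigma$-indexed Fourier supports; the identity $\tilde p/\bar p=z^n$ on $\T^d$ is the single ingredient that bridges the two, and every direction of both equalities reduces to a $\sigma$-Parseval calculation.
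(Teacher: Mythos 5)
Your proof is correct, and it takes a genuinely different route from the paper's. The paper first proves $\langle z^\alpha,\tilde p\rangle_\mu=0$ for $\alpha\ngeq n$ (the same core calculation as yours, using $\tilde p=z^n\bar p$ on $\T^d$), and then establishes the reverse inclusion $L^2_\mu(\ints_+^d\ominus B)\subset\tilde p L^2_\mu(\ints_+^d)$ by a lattice induction: writing $\tilde p=az^n+q$ with $a=\overline{p(0)}\neq 0$, one shows that if $f\in L^2_\mu(\ints_+^d\ominus B)$ is $\mu$-orthogonal to $\tilde p L^2_\mu(\ints_+^d)$ then $f\perp z^\alpha$ for every $\alpha\geq n$ by downward induction on $\alpha$, forcing $f\equiv 0$; the second equality is then extracted via the bookkeeping Lemma \ref{lem-comple}. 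You instead observe that $g\mapsto\tilde p g$ is a $\sigma$-to-$\mu$ isometry (since $|\tilde p|=|p|$) and that $g\mapsto g/p$ is a bijection of $H^2$ (since $1/p\in H^\infty$), which converts $\mu$-orthogonality to $\tilde p H^2$ into the plain Fourier-support condition $\hat h(\gamma)=0$ for $\gamma\geq n$; and for the second equality you pull the Riesz decomposition $L^2(\sigma)=H^2\oplus(H^2)^{\perp}$ back through multiplication by $\tilde p$ to exhibit $L^2(\mu)=\tilde p H^2\oplus L^2_\mu(\ints^d\setminus\ints_{n+}^d)$ directly. Your version avoids both the lattice induction and the appeal to Lemma \ref{lem-comple}, and makes the structural role of the isometry $\tilde p\cdot$ explicit; the paper's version is more hands-on with the monomials but requires the extra complementation lemma. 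Both rely on the same nonvanishing hypothesis for $p$ on $\overline{\D^d}$, and both are complete.
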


\begin{proof}

Observe that $\tilde{p}(z) = z^n \overline{p(z)}$ on $\T^d$ and so
\[
\begin{aligned}
\ip{z^\al}{\tilde{p}}_\mu &= \int_{\T^d} z^\al \overline{\tilde{p}(z)}
\frac{1}{|p(z)|^2} d\sigma(z) \\ & = \int_{\T^d} z^{\al}
\frac{\bar{z}^{n}p(z)}{|p(z)|^2} d\sigma(z) \\ & = \int_{\T^d}
\frac{z^{\al-n}}{\overline{p(z)}} d\sigma(z).
\end{aligned}
\]
This equals zero if any component of $\al-n$ is negative (i.e. if $\al
\ngeq n$) since $1/\bar{p}$ is anti-analytic in $\D^d$.  In
particular, if $\al \ngeq n$, then for $\beta \geq 0$, $\al \ngeq n +
\beta$ and therefore
\[
\ip{z^\al}{z^\beta \tilde{p}}_{\mu} = 0.
\]
This shows
\[
\tilde{p} L^2_{\mu}(\ints_{+}^{d}) \perp
L^2_\mu(\ints^d\setminus \ints^d_{n+})
\]
which means
\[
\tilde{p} L^2_{\mu}(\ints_{+}^{d}) \subset
L^2_\mu(\ints^d\ominus(\ints^d\setminus \ints^d_{n+})) \cap
L_\mu^2(\ints^d_+ \ominus B).
\]
  Conversely, if $f \in L_\mu^2(\ints^d_+ \ominus B)$ and $f \perp
  \tilde{p} L^2_{\mu}(\ints_{+}^{d})$, then we can show $f \perp
  L^2_\mu(\ints_{+}^d)$ as follows.

Since $p(0) \ne 0$, $\tilde{p}(z) = a z^n + q(z)$ with $a =
\overline{p(0)} \ne 0$ and $q$ of degree at most $n$ with no $z^n$
term.  By assumption on $f$, $f \perp \tilde{p}$ and $f \perp q$ ($q
\in L^2_{\mu}(B)$).  Therefore, $f \perp z^n$.  From here we can give
an inductive proof on the lattice $\ints^d_{+}$.  If $f$ is orthogonal
to all non-negative frequencies less than some $\al \geq n$, then $f$
is orthogonal to
\[
z^{\al-n}\tilde{p}(z) = a z^\al + z^{\al-n} q(z) \qquad \text{ and  } \qquad
z^{\al-n}q(z)
\]
as the latter contains only frequencies less than $\al$.  This implies
$f \perp z^\al$, and by induction $f \perp L^2_\mu(\ints_+^{d})$. (As
this is a non-traditional way of doing induction we should explain
using the contrapositive: if $f$ is \emph{not} perpendicular to some
$z^\al$, then $f$ must also \emph{not} be perpendicular to some
$z^\beta$ with $\beta < \al$.  This can be continued until $f$ is
\emph{not} perpendicular to a monomial supported in $B$---a
contradiction.) This forces $f \equiv 0$.

Hence, $L^2_\mu (\ints^d_{+} \ominus B) = \tilde{p}
L^2_\mu(\ints_{+}^d) \subset L^2_{\mu}(\ints^d\ominus
(\ints^d\setminus \ints_{n+}^d))$.  By Lemma \ref{lem-comple} given
below, we automatically have
\[
L^2_\mu (\ints^d_{+} \ominus B) = \tilde{p} L^2_\mu(\ints_{+}^{d})
= L^2_{\mu}(\ints^d\ominus \ints_{n+}^d).
\]  
\end{proof}

\begin{lemma}\label{lem-comple}
 Suppose $W,Y \subset \ints^d$ and set $X = W \cup Y$.  Then,
\begin{equation} \label{cond1}
L^2_{\mu} (X) \ominus L^2_{\mu}(Y) \subset L^2_{\mu}(W)
\end{equation}
if and only if
\begin{equation} \label{cond2}
L^2_{\mu} (W) \ominus L^2_{\mu}(Y\cap W) \subset
(L^2_{\mu}(Y))^{\perp}
\end{equation}
and in either case 
\[
L^2_{\mu}(X) \ominus L^2_{\mu}(Y) = L^2_{\mu}(W)\ominus
L^2_{\mu}(Y\cap W).
\]
\end{lemma}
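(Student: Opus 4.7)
The plan is to reduce the statement to a purely Hilbert-space fact about two closed subspaces. Set $A := L^2_\mu(W)$ and $B := L^2_\mu(Y)$; then the two identifications $A \cap B = L^2_\mu(W\cap Y)$ (immediate from matching Fourier supports) and $L^2_\mu(X) = \overline{A+B}$ (since the monomials $z^\alpha$ with $\alpha\in X = W\cup Y$ all lie in $A\cup B$ and span $L^2_\mu(X)$) let me rewrite \eqref{cond1} as $\overline{A+B}\cap B^\perp \subset A$ and \eqref{cond2} as $A\cap (A\cap B)^\perp \subset B^\perp$. All remaining work happens at this abstract level.

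For $\eqref{cond1}\implies\eqref{cond2}$, I would take $f\in A$ with $f\perp A\cap B$, set $b := P_B f$, and note that $f-b \in B^\perp \cap \overline{A+B}$; \eqref{cond1} then forces $f-b\in A$, hence $b = f-(f-b)\in A\cap B$. Since $f\perp A\cap B$, this gives $\|b\|^2 = \langle f,b\rangle = 0$, so $b=0$ and $f\perp B$. For the converse $\eqref{cond2}\implies\eqref{cond1}$, I would take $f\in \overline{A+B}\cap B^\perp$ and decompose $f = P_A f + (I-P_A)f$. A one-line check (using $f\perp B$ and $A\cap B\subset A$) shows $P_A f\perp A\cap B$, and then \eqref{cond2} upgrades this to $P_A f\perp B$. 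Consequently $(I-P_A)f$ is orthogonal to both $A$ and $B$, hence to $\overline{A+B}$; but it also lies in $\overline{A+B}$, so $(I-P_A)f = 0$ and $f = P_A f\in A$.

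For the stated equality of subspaces, I would observe that each direction actually produces the stronger sandwich $\overline{A+B}\cap B^\perp = A\cap B^\perp = A\cap (A\cap B)^\perp$: the trivial inclusions $A\cap B^\perp \subset \overline{A+B}\cap B^\perp$ and $A\cap B^\perp \subset A\cap (A\cap B)^\perp$ combined with either hypothesis collapse to equality on that side, and the just-proved equivalence transports equality to the other side. The only real obstacle is bookkeeping: every auxiliary element must be verified to live in $\overline{A+B}$ before orthogonality to both $A$ and $B$ can be promoted to orthogonality to $L^2_\mu(X)$, and this is precisely where the hypothesis $X = W\cup Y$ enters.
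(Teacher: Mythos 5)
Your proof is correct, and it takes a noticeably different route from the paper's. The paper's proof is built around the pair of orthogonal decompositions
\[
L^2_{\mu}(X\ominus (Y\cap W)) = L^2_{\mu}(X \ominus Y) \oplus L^2_{\mu}(Y\ominus (Y\cap W)) = L^2_{\mu}(X \ominus W) \oplus L^2_{\mu}(W\ominus (Y\cap W)),
\]
and then ``traps'' a hypothetical nonzero element of the complementary gap in $L^2_\mu(Y\cap W)\ominus L^2_\mu(Y\cap W)=\{0\}$ (one direction) or in $L^2_\mu(X)^\perp\cap L^2_\mu(X)=\{0\}$ (the other). You instead abstract immediately to two closed subspaces $A=L^2_\mu(W)$, $B=L^2_\mu(Y)$ of a Hilbert space and work with the projections $P_A,P_B$ directly, isolating the two lattice-to-subspace facts $A\cap B=L^2_\mu(W\cap Y)$ and $\overline{A+B}=L^2_\mu(W\cup Y)$ as the only measure-theoretic input. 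Both arguments ultimately rest on exactly these two identities (the paper's are hidden in the steps ``$f\in L^2_\mu(Y\cap W\ominus Y\cap W)$'' and ``$L^2_\mu(Y)+L^2_\mu(W)=L^2_\mu(X)$''); you surface them at the outset, which buys a proof that is visibly a statement about an arbitrary pair of closed subspaces satisfying those two conditions. Your closing observation that either hypothesis forces the stronger three-way equality $\overline{A+B}\cap B^\perp = A\cap B^\perp = A\cap(A\cap B)^\perp$ is a clean way to get the stated identity and is not made explicit in the paper. One small point worth spelling out if you write this up: the identification $\overline{A+B}=L^2_\mu(W\cup Y)$ rests on the monomials $\{z^\alpha:\alpha\in X\}$ being total in $L^2_\mu(X)$, which uses the mutual boundedness of $d\mu$ and $d\sigma$ (so that $L^2(\mu)$ and $L^2(\sigma)$ have the same topology); you should note this rather than treat it as automatic.
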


\begin{proof}
This is essentially a result of the decomposition
\begin{align}
L^2_{\mu}(X\ominus (Y\cap W)) &= L^2_{\mu}(X \ominus Y) \oplus
L^2_{\mu}(Y\ominus (Y\cap W)) \label{decomp1}\\
&= L^2_{\mu}(X \ominus W) \oplus
L^2_{\mu}(W\ominus (Y\cap W)). \label{decomp2}
\end{align}

Suppose $L^2_{\mu}(X\ominus Y) \subset L^2_{\mu}(W)$ which necessarily
means $L^2_{\mu}(X \ominus Y) \subset L^2_{\mu}(W \ominus (Y\cap W))$.
If $f \in L^2_{\mu}(W \ominus (Y\cap W)) \ominus L^2_{\mu}(X \ominus
Y)$, then $f \in L^2_{\mu}(Y \ominus (Y\cap W))$ by \eqref{decomp1}.
Hence, $f \in L^2_{\mu}(Y\cap W \ominus Y\cap W) = \{0\}$ showing that
$L^2_{\mu}(X\ominus Y)$ fills out all of $L^2_{\mu}(W \ominus (Y\cap
W))$.  

Suppose $L^2_{\mu}(W \ominus (Y\cap W)) \subset L^2_{\mu} (Y)^{\perp}$
which necessarily means $L^2_{\mu}(W \ominus (Y\cap W)) \subset
L^2_{\mu}(X \ominus Y)$.  If $f \in L^2_{\mu}(X \ominus Y) \ominus
L^2_{\mu}(W \ominus (Y\cap W))$, then $f \in L^2_{\mu}(X \ominus W)$
by \eqref{decomp2}.  Hence, $f \perp L^2_{\mu}(Y) + L^2_{\mu}(W) =
L^2_{\mu}(X)$, forcing $f \equiv 0$. This shows that $L^2_{\mu}(W
\ominus (Y\cap W))$ fills out all of $L^2_{\mu}(X\ominus Y)$.
\end{proof}

So, we have shown that $\mcp$ represents the reproducing kernel of
$L^2_{\mu} (B)$.  Any orthogonal decomposition of $L^2_{\mu}(B)$ then
gives a decomposition of $\mcp$.  Our goal is to prove that
$L^2_{\mu}(B)$ has a decomposition with very special properties.

\section{Orthogonal decompositions of $L^2_{\mu} (B)$}
We recall the definition of $B$ and define several subsets of $B$ below:
\begin{notation} \label{Bsets}
\[
\begin{aligned}
X_j &:= \{\al \in \ints^{d}_{+}: \al_j < n_j\} \\ X_S &:=
\bigcup_{j\in S} X_j = \{\al \in \ints^{d}_{+}: \exists j \in S: \al_j
< n_j\} \\ B & = \bigcup_{j=1}^{d} X_j = \{ \al \in \ints_{+}^d:
\exists j: \al_j < n_j\} \\
\end{aligned}
\]
where $S \subset \{1,2,\dots,d\}$.
\end{notation}

\begin{prop} \label{prop-orth}
With the same setup as Proposition \ref{prop-kernel} let $S\sqcup T
=\{1,2,\dots, d\}$ be a partition. Then,

\[
L^2_{\mu}(B) = L^2_{\mu}(X_S) \oplus L^2_{\mu}(X_T \ominus (X_T\cap
X_S)).
\] 
\end{prop}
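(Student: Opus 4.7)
The plan is to derive Proposition \ref{prop-orth} directly from Lemma \ref{lem-comple}. Set $W = X_T$ and $Y = X_S$; since $S \sqcup T = \{1,\dots,d\}$, one has $W \cup Y = X_S \cup X_T = B$ and $W \cap Y = X_T \cap X_S$. Provided either of the equivalent conditions \eqref{cond1}, \eqref{cond2} can be verified, Lemma \ref{lem-comple} hands us
\[
L^2_\mu(B) \ominus L^2_\mu(X_S) = L^2_\mu(X_T) \ominus L^2_\mu(X_T \cap X_S) = L^2_\mu(X_T \ominus (X_T \cap X_S)),
\]
which rearranges into the desired orthogonal decomposition.

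So the whole task reduces to verifying one of those two equivalent conditions. I would aim at \eqref{cond1}: that $L^2_\mu(B) \ominus L^2_\mu(X_S) \subset L^2_\mu(X_T)$. Concretely, given $f \in L^2_\mu(B)$ with $f \perp L^2_\mu(X_S)$, I need $\hat{f}(\al) = 0$ for every $\al \in B \setminus X_T$; these are exactly the $\al \in \ints^d_{+}$ with $\al_j < n_j$ for some $j \in S$ and $\al_k \geq n_k$ for all $k \in T$. The useful reformulation is the identity $\langle f, z^\beta \rangle_\mu = \widehat{f/|p|^2}(\beta)$, which converts the hypothesis $f \perp L^2_\mu(X_S)$ into the statement that $\widehat{f/|p|^2}$ vanishes on $X_S$. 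Together with $\hat{f}$ being supported in $B$, this should force $\hat{f}$ to vanish on $X_S \setminus X_T$.

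The main obstacle is executing the Fourier cancellation. Expanding $f = |p|^2 \cdot (f/|p|^2)$ term by term via the Fourier expansion of $|p|^2$ (supported in $[-n,n]$) does not annihilate $\hat{f}(\al)$ individually on $X_S \setminus X_T$, so a global argument is required. I would model the scheme on the induction used in Lemma \ref{lem-tilde}, leveraging the shift-invariance $X_S + e_k \subset X_S$ for $k \in T$ (equivalently, $z_k L^2_\mu(X_S) \subset L^2_\mu(X_S)$, which passes to $\bar z_k$-invariance of the orthocomplement) to peel off the $T$-components of $\al$ that exceed the corresponding components of $n$, reducing to a boundary case with $\al_k = n_k$ for all $k \in T$. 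That boundary case should then be closed directly using the relation $|p|^2 = p \tilde{p}/z^n$ on $\T^d$, in the same spirit as the $\tilde{p}$-calculation that underlies Lemma \ref{lem-tilde}.
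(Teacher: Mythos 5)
Your reduction to Lemma \ref{lem-comple} is exactly right and matches the paper, as is the translation of the hypothesis $f \perp L^2_\mu(X_S)$ into $\widehat{f/|p|^2}$ vanishing on $X_S$. The gap is in the ``peeling'' step. Trying to show $\hat{f}(\al)=0$ for $\al \in B\setminus X_T$ by writing $\hat{f}(\al)=\ip{f}{z^{\al}|p|^2}_\mu$ and expanding $z^\al|p|^2 = z^{\al-n}p\tilde p$ (using $|p|^2 = z^{-n}p\tilde p$ on $\T^d$) inevitably produces pairings $\ip{f}{z^\gamma}_\mu$ with $\gamma_j < 0$ for some $j\in S$, because $\al_j < n_j$ there. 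The hypotheses only tell you $\ip{f}{z^\gamma}_\mu=0$ for $\gamma\in X_S$ and (via Lemma \ref{lem-tilde}) for $\gamma$ of the form $\tilde p$ times a nonnegative monomial; nothing constrains $\ip{f}{z^\gamma}_\mu = \widehat{f/|p|^2}(\gamma)$ at exponents with a negative $S$-component, and these are generically nonzero. The $\bar z_k$-invariance you cite holds for the orthocomplement in $L^2(\mu)$, but $\bar z_k f$ leaves the analytic space, so the shift does not stay inside $L^2_\mu(B)$ and the induction modelled on Lemma \ref{lem-tilde} does not close.

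The paper avoids this entirely with a slicing construction. Fixing $z_S \in \T^s$, it applies Proposition \ref{prop-kernel} to the $t$-variable slice measures $d\mu_{z_S}(z_T)= |p(z_S,z_T)|^{-2}d\sigma(z_T)$ to get the reproducing kernels $\mcp^T_{z_S}$ of $L^2_{\mu_{z_S}}(B_T)$, then builds the test function $L_\zeta(z) = z_S^{n_S}\bar\zeta_S^{n_S}\Sz_s(z_S,\zeta_S)\,\mcp^T_{z_S}(z_T,\zeta_T)$. Its Fourier support places it in $L^2_\mu(X_T)$ (so the orthogonality hypothesis kills $\ip{f}{L_\zeta}_\mu$), while splitting $L_\zeta = A_\zeta - B_\zeta$ puts $B_\zeta$ inside $\tilde p H^2(\T^d)$ (killed by $f\in L^2_\mu(B)$) and turns $\ip{f}{A_\zeta}_\mu$ into an explicit Cauchy/Szeg\H{o} computation that yields exactly the projection of $f$ onto $z_S^{n_S}H^2(\T^d)$. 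Everything stays on the analytic side, so the uncontrolled negative-frequency pairings never appear. If you want to salvage an elementary route, you would need some additional input giving control of $\widehat{f/|p|^2}$ at exponents with negative $S$-components; the slice-kernel test function is precisely the device that supplies what the raw Fourier expansion of $|p|^2$ cannot.
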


The content of the above proposition is that the subspaces listed in
the orthogonal decomposition are \emph{actually orthogonal}, something
which would not hold for a general finite measure on $\T^d$. This
proposition is still valid if $S$ or $T$ are empty if we interpret
$X_{\varnothing} = \{0\}$.  This makes the proposition sensible
(although trivial) in the case $d=1$ (something useful later).

We need the following notation for use in dividing up all of
structures according to the partition $S\sqcup T = \{1,\dots,
d\}$. There is no harm in assuming $S = \{1,\dots s\}$, $T =
\{s+1,\dots, d\}$, and $t:= d-s$.
\[
\begin{aligned}
z_S &= (z_1,\dots, z_s) \in \C^{s}, & z_T &= (z_{s+1}, \dots, z_d) \in
\C^t, & z &= (z_S, z_T)\\
n_S &= (n_1,\dots, n_s) \in \ints^{s} , & n_T &= (n_{s+1} ,\dots, n_d) \in \ints^{t}, &
n &= (n_S,n_T) \\
\al_S &= (\al_1,\dots, \al_s) \in \ints^{s} , & \al_T &= (\al_{s+1},
\dots, \al_d) \in \ints^{t}, & \al &= (\al_S, \al_T) \\
B_S &= \{ \al_S \in \ints_{+}^{s}: \al_S \ngeq n_S\}, & B_T &= \{\al_T
\in \ints_{+}^{t}: \al_T \ngeq n_T\} &
\end{aligned}
\]

\begin{proof}[Proof of Proposition \ref{prop-orth}]
The proposition is really a type of inclusion-exclusion principle as
it can be rewritten as saying
\[
L^2_{\mu}((X_S\cup X_T) \ominus X_S) = L^2_{\mu}(X_T\ominus (X_S\cap
X_T))
\]
since $B = X_S\cup X_T$.

To prove it, consider following the measures $\mu_{z_S}$ on $\T^t$
which are indexed by $z_S \in \T^s$:
\[
d\mu_{z_S}(z_T) = \frac{1}{|p(z_S,z_T)|^2} d\sigma(z_T)
\]
i.e. for each $z_S \in \T^s$ we get a measure on $\T^t$, and points in
$\T^t$ are denoted by $z_T$.  

By Proposition \ref{prop-kernel}, the reproducing kernel for
$L^2_{\mu_{z_S}}(B_T)$ is
\[
\mcp^T_{z_S}(z_T,\zeta_T) := (p(z_S,z_T)\overline{p(z_S,\zeta_S)}
- \tilde{p}(z_S, z_T)
\overline{\tilde{p}(z_S,\zeta_T)})\Sz_t(z_T,\zeta_T)
\]
where again $\Sz_t$ is the $t$-dimensional Szeg\H{o} kernel.  Notice
that $\mcp^T_{z_S}(z_T,\zeta_T)$ is a trigonometric polynomial of
degree at most $n_S$ as a function of $z_S$, while as a function of
$z_T$ this function only has Fourier coefficients corresponding to
points of $B_T$.  For these reasons, the function of $z =(z_S,z_T) \in
\T^d$ defined for each fixed $\zeta \in \D^d$ by
\[
L_{\zeta}(z) = L(z,\zeta) = z_S^{n_S} \bar{\zeta_S}^{n_S}
 \Sz_s(z_S,\zeta_S) \mcp^T_{z_S}(z_T,\zeta_T)
\]
is in $L^2_{\mu}(\ints_{+}^s \times B_T) =
L^2_{\mu}(X_T)$. (Specifically, as a function of $(z_S,z_T)$
\[
\Sz_s(z_S,\zeta_S) \in L^2_{\mu}(\ints_{+}^s \times
  \{0_T\})
\]
\[
\mcp^T_{z_S}(z_T,\zeta_T) \in L^2_{\mu}([-n_S, n_S] \times B_T)
\]
Here $0_T$ is the zero $t$-tuple in $\ints^t$ and $[-n_S,n_S] = \{\al_S
\in \ints^{s}: -n_S\leq \al_S \leq n_S\}$.)
So, if $f \perp L^2_{\mu}(X_T)$, then
\begin{equation} \label{obs1}
\ip{f}{L_\zeta}_{\mu} = 0 \text{ for all } \zeta \in \D^d.
\end{equation}

On the other hand, $L$ can be thought of as a difference
of two terms:
\begin{multline*}
L_\zeta(z)=\underset{A_{\zeta}}{\underbrace{p(z_S,z_T)\overline{p(z_S,\zeta_T)} (z_S^{n_S}\bar{\zeta_S}^{n_S})
\Sz_d(z,\zeta)}} \\ - \underset{B_\zeta}{\underbrace{\tilde{p}(z_S,z_T)\overline{\tilde{p}(z_S,\zeta_T)}
 (z_S^{n_S}\bar{\zeta_S}^{n_S}) \Sz_{d}(z,\zeta)}}.
\end{multline*}
(We used $\Sz_d(z,\zeta) = \Sz_s(z_S,\zeta_S) \Sz_t(z_T,\zeta_T)$
above.)

Since $z_S^{n_S} \overline{p(z_S,\zeta_T)}$ has only non-negative
Fourier coefficients in $z_S$, the second term $B_\zeta$ is an element
of $\tilde{p} H^2(\T^d) = L^2_{\mu}(\ints_{+}^{d} \ominus B)$. So, if
$f \in L^2_{\mu}(B)$, then $B_\zeta \perp f$ and we have
\begin{equation} \label{obs2}
\ip{f}{L_\zeta}_{\mu} = \ip{f}{A_\zeta}_{\mu}.
\end{equation}

Finally, if $f \in L^2_{\mu}(\ints_{+}^d)$ then
\begin{align}
\ip{f}{A_\zeta}_{\mu} &= \int_{\T^s} \int_{\T^{t}} f(z)
\overline{p(z)} p(z_S,\zeta_T) \overline{\Sz_{t}(z_T,\zeta_T)}
\frac{d\sigma(z_T)}{|p(z)|^2}
(\bar{z_S}^{n_S}\zeta_S^{n_S})\overline{\Sz_s(z_S,\zeta_S)}
d\sigma(z_S) \nonumber \\ &= \int_{\T^s} \int_{\T^{t}}
\frac{f(z)}{p(z)} p(z_S,\zeta_T) \overline{\Sz_{t}(z_T,\zeta_T)}
d\sigma(z_T)
(\bar{z_S}^{n_S}\zeta_S^{n_S})\overline{\Sz_s(z_S,\zeta_S)}
d\sigma(z_S) \nonumber \\ 
&=\int_{\T^s}
\frac{f(z_S,\zeta_T)}{p(z_S,\zeta_T)} p(z_S,\zeta_T)
(\bar{z_S}^{n_S}\zeta_S^{n_S})\overline{\Sz_s(z_S,\zeta_S)}
d\sigma(z_S) \nonumber \\
&= \int_{\T^s} f(z_S,\zeta_T)
(\bar{z_S}^{n_S}\zeta_S^{n_S})\overline{\Sz_s(z_S,\zeta_S)} d\sigma(z_S)
\nonumber \\ &
= \sum_{\al_S \geq n_S} \sum_{\al_T \geq 0} \hat{f}(\al_S,\al_T)
\zeta^\al \label{obs3}
\end{align}
which is the $L^2(\T^d)$ projection of $f$ to $z_S^{n_S}H^2(\T^d)$.
(The second and fourth equalities are algebra, the third is the
reproducing property of $\Sz_t$, and the fifth is a Fourier series
computation.) 

If we combine the observations \eqref{obs1}, \eqref{obs2},
\eqref{obs3} above we see that if
\[
f \perp L^2_{\mu}(X_T) \text{ and } f \in L^2_{\mu}(B)
\]
then 
\[
\hat{f}(\al) = 0
\]
for $\al_S\geq n_S$, $\al_T \geq 0$ and therefore $f \in
L^2_{\mu}(X_S)$. So, $L^2_{\mu}(B\ominus X_T) \subset L^2_{\mu}(X_S)$.

By Lemma \ref{lem-comple}, this proves
\[
L^2_{\mu}(B\ominus X_T) = L^2_{\mu}(X_S\ominus (X_S\cap X_T))
\]
since $B = X_S \cup X_T$.
\end{proof}

\section{Closed under shifts}
The goal of this section is to prove two facts.

\begin{prop} \label{prop-inv0} 
With the setup of Proposition \ref{prop-orth}, $L^2_{\mu}(X_S)$ is
closed under multiplication by $z_j$ for all $j \notin S$, and contains
all subspaces of $L^2_{\mu}(B)$ with this property.
\end{prop}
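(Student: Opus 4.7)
The plan is to verify both halves of the proposition directly by tracking supports of Fourier series on $\T^d$, using that $L^2(\mu)$ and $L^2(\T^d)$ coincide as vector spaces. Throughout, write $T := \{1,\dots,d\}\setminus S$ and let $e_j$ denote the $j$-th standard basis vector in $\ints^d$.

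For the shift-invariance assertion, I would fix $j \in T$ and $f \in L^2_\mu(X_S)$. Since $|z_j|=1$ on $\T^d$, multiplication by $z_j$ is an isometry of $L^2(\T^d)$ and hence a bounded operator on the topologically equivalent $L^2(\mu)$, so $z_j f \in L^2(\mu)$. Its Fourier coefficients satisfy $\widehat{z_j f}(\al) = \hat f(\al - e_j)$, so it suffices to check $X_S + e_j \subset X_S$. This is immediate from the definition: if $\al \in X_S$, then $\al \in \ints_{+}^d$ and some $k \in S$ has $\al_k < n_k$; since $j \ne k$, we have $(\al+e_j)_k = \al_k < n_k$ and clearly $\al+e_j \in \ints_{+}^d$, so $\al+e_j \in X_S$.

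For the maximality assertion, let $M \subset L^2_\mu(B)$ be a subspace closed under multiplication by each $z_j$ with $j \in T$, and let $f \in M$. I would show $\hat f(\gamma) = 0$ for every $\gamma \notin X_S$. Since $L^2_\mu(B) \subset L^2_\mu(\ints_{+}^d)$, this is automatic if some component of $\gamma$ is negative. Otherwise $\gamma \in \ints_{+}^d \setminus X_S$, which means $\gamma_k \geq n_k$ for all $k \in S$. I would then choose $\beta \in \ints_{+}^d$ with $\beta_k = 0$ for $k \in S$ and $\beta_k$ large enough for $k \in T$ that $\gamma_k+\beta_k \geq n_k$; then $\gamma+\beta \geq n$ and hence $\gamma+\beta \notin B$. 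Since $z^\beta$ is a product of variables $z_j$ with $j \in T$, the hypothesis gives $z^\beta f \in M \subset L^2_\mu(B)$, and therefore $\hat f(\gamma) = \widehat{z^\beta f}(\gamma+\beta) = 0$.

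No step here is genuinely delicate; the entire content is the combinatorics of the sets $X_S$ and $B$. The cleanest way to view the argument is that $X_S$ is defined by a constraint only on the $S$-coordinates, making it stable under shifts in the $T$-directions; conversely, any $f \in L^2_\mu(B)$ whose Fourier support is stable under shifts in the $T$-directions cannot have a nonzero Fourier coefficient at $\gamma \in \ints_{+}^d$ with $\gamma_S \geq n_S$, since translating in the $T$-directions would eventually push such a $\gamma$ out of $B$.
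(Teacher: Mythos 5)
Your proof is correct and takes essentially the same approach as the paper: both halves are direct Fourier-support arguments, with the first half checking that $X_S + e_j \subset X_S$ for $j \notin S$ and the second half using a shift into the $T$-directions to push a hypothetical bad frequency $\gamma$ with $\gamma_S \geq n_S$ out of $B$, yielding a contradiction. The paper uses the specific shift $m$ with $m_j = n_j$ for $j \in T$ (and $m_j=0$ for $j \in S$) where you allow any sufficiently large $\beta$, but the argument is the same.
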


\begin{prop} \label{prop-inv} With the setup of Proposition
  \ref{prop-orth}, $L^2_{\mu} (X_S\ominus
  (X_S\cap X_T))$ is closed under multiplication by $z_j$ for all $j
  \in T$.  
\end{prop}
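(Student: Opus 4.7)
My plan is to adapt the $L_\zeta$-kernel construction from the proof of Proposition~\ref{prop-orth}, now with the roles of $S$ and $T$ interchanged. Concretely, set
\[
\tilde L_\zeta(z):=z_T^{n_T}\bar{\zeta_T}^{n_T}\Sz_t(z_T,\zeta_T)\mcp^{S}_{z_T}(z_S,\zeta_S),
\]
where $\mcp^{S}_{z_T}$ is the reproducing kernel for $L^2_{\mu_{z_T}}(B_S)$ (Proposition~\ref{prop-kernel} applied in the $z_S$-variable, with $z_T$ as parameter). The identical computation as in Proposition~\ref{prop-orth}'s proof yields, for every $h\in L^2_\mu(B)$,
\[
\langle h,\tilde L_\zeta\rangle_\mu=\sum_{\alpha\geq 0,\ \alpha_T\geq n_T}\hat h(\alpha)\zeta^\alpha.
\]
This shows each $\tilde L_\zeta$ belongs to $L^2_\mu(X_S\ominus X_S\cap X_T)$ and that $\{\tilde L_\zeta\}_{\zeta\in\D^d}$ is $\mu$-dense there: if $f$ in the subspace were orthogonal to all $\tilde L_\zeta$ then $\hat f(\alpha)=0$ for $\alpha_T\geq n_T$, putting $f\in L^2_\mu(X_T)$, and since $f\perp_\mu L^2_\mu(X_T)$ by Proposition~\ref{prop-orth}, $f=0$.

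Because multiplication by $z_j$ is unitary on $L^2(\mu)$ and $L^2_\mu(X_S\ominus X_S\cap X_T)$ is closed, the density above reduces the proof to verifying $z_j\tilde L_\zeta\in L^2_\mu(X_S\ominus X_S\cap X_T)$ for every $\zeta\in\D^d$ and every $j\in T$. Proposition~\ref{prop-inv0} gives the containment $z_j\tilde L_\zeta\in L^2_\mu(X_S)$ immediately, so the only remaining (and main) step is $\mu$-orthogonality of $z_j\tilde L_\zeta$ to $L^2_\mu(X_S\cap X_T)$.

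For this I replay the Proposition~\ref{prop-orth} computation with $z_j\tilde L_\zeta$ in place of $\tilde L_\zeta$. Split $z_j\tilde L_\zeta$ into its $p$- and $\tilde p$-pieces as before; since $j\in T$, the extra $z_j$ factor is absorbed on the $\tilde p$-side into $\tilde p L^2_\mu(\ints_+^d)=L^2_\mu(\ints_+^d\ominus B)$ (Lemma~\ref{lem-tilde}), which is $\mu$-orthogonal to $L^2_\mu(B)$. Running the same $\Sz_s$-reproducing identity and Fourier-series step on the $p$-piece produces, for $h\in L^2_\mu(B)$,
\[
\langle h,z_j\tilde L_\zeta\rangle_\mu=\sum_{\alpha\geq 0,\ \alpha_T\geq n_T}\hat h(\alpha+e_j)\zeta^\alpha.
\]
When $h\in L^2_\mu(X_S\cap X_T)$, the constraints $\alpha_T\geq n_T$ and $j\in T$ together force $(\alpha+e_j)_T\geq n_T$, so $\alpha+e_j\notin X_T\supset X_S\cap X_T$, giving $\hat h(\alpha+e_j)=0$. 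The sum vanishes, completing the proof. The main obstacle is precisely this Fourier bookkeeping: the argument works because shifting by $e_j$ with $j\in T$ pushes $\alpha+e_j$ \emph{out of} $X_T$; had $j$ been in $S$, the shift would stay inside $X_T$ and no cancellation would occur, which is exactly why the invariance direction in the proposition is $T$ rather than $S$.
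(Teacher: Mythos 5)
Your proof is correct, and it takes a genuinely different route from the paper's. The paper proves Proposition~\ref{prop-inv} by adjoining an auxiliary variable $z_0$ and working in $d+1$ dimensions: it introduces the reproducing kernel $\Delta_{z_0}$ of the slice space $L^2_{\mu_{z_0}}(X_S)$, builds a kernel $L_Z$ from $\Delta_{z_0}$, and computes $\ip{f}{L_Z}_\mu$ to conclude that $L^2_\mu(Y_S\ominus (Y_S\cap Y_{T_0}))$ is orthogonal to $L^2_\mu((-\infty,0)\times X_S)$, from which closure under $z_0$ follows by a backward-shift argument. Because $\Delta_{z_0}$ has no nice closed form (its index set $X_S$ is infinite), the paper also needs a separate technical lemma (Lemma~\ref{regularity}) to know that $\Delta_{z_0}$ is even square-integrable in $(z_0,z)$. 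You instead stay in $d$ variables and reuse the explicit $\tilde L_\zeta$-family from the proof of Proposition~\ref{prop-orth} with $S$ and $T$ swapped; you show this family is total in $L^2_\mu(X_S\ominus (X_S\cap X_T))$, verify by a direct Fourier computation that $z_j\tilde L_\zeta$ stays in the space, and finish by continuity of the (unitary) multiplication operator. Your route avoids the extra variable and Lemma~\ref{regularity} entirely, because $\mcp^S_{z_T}$ has a closed form and is manifestly bounded on $\T^d$; the trade-off is that you essentially redo the Proposition~\ref{prop-orth} computation with the extra $\bar z_j$ inserted, whereas the paper reuses Proposition~\ref{prop-orth} as a black box. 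Both are legitimate; yours is arguably the more self-contained argument for this particular proposition.

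Two small points worth tightening: first, the formula $\ip{h}{\tilde L_\zeta}_\mu=\sum_{\alpha\geq 0,\ \alpha_T\geq n_T}\hat h(\alpha)\zeta^\alpha$ by itself only gives $\tilde L_\zeta\perp L^2_\mu(X_T)$; the membership $\tilde L_\zeta\in L^2_\mu(X_S)$ requires the separate Fourier-support observation (as the paper makes for $L_\zeta\in L^2_\mu(X_T)$), which you are implicitly invoking with ``identical computation'' but should state. Second, in the density step you should note explicitly that $f\in L^2_\mu(B)\subset L^2_\mu(\ints_+^d)$, so that vanishing of $\hat f$ on $\{\alpha\geq 0:\alpha_T\geq n_T\}$ does place $f$ in $L^2_\mu(X_T)$. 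Neither point is a gap, just bookkeeping to make explicit.
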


The first fact is not difficult.
\begin{proof}[Proof of Proposition \ref{prop-inv0}]
An element $f \in L^2_{\mu}(B)$ is in $L^2_{\mu}(X_S)$ if and only if
$\hat{f}(\al) = 0$ whenever $\al_k \geq n_k$ for all $k \in S$.  This
property is obviously unaffected by multiplying $f$ by variables $z_j$
for $j \notin S$.  

On the other hand, if $f \in L^2_{\mu}(B)$, has the property that 
\[
z^\al f \in L^2_{\mu}(B)
\]
for all $\al \geq 0$ satisfying $\al_j = 0$ for $j\in S$, then $f$
must be an element of $L^2_{\mu}(X_S)$.  Otherwise, $\hat{f}(\al) \ne
0$ for some $\al \geq 0$, with $\al_k \geq n_k$ for all $k\in S$.
But then if we set $m = (m_1,\dots, m_d)$ where
\[
m_j = \begin{cases} 0 & \text{for } j \in S \\ 
n_j & \text{for } j \notin S \end{cases}
\]
then $z^m f \notin L^2_{\mu}(B)$---a contradiction.  This proves that
$L^2_{\mu}(X_S)$ contains all subspaces closed under multiplication by
all $z_j$ for $j \notin S$.
\end{proof}

As for Proposition \ref{prop-inv}, it is convenient to prove the
proposition by adjoining a variable and using results in $d$ variables
that have already been proven.  Elements of $\C^{d+1}$ will be written
as $(z_0,z)$. So, now $p \in \C[z_0,z]$ is a polynomial of $d+1$
variables of degree at most $(n_0,n)$ with no zeros in
$\overline{\D^{d+1}}$.  The measure $\mu$ corresponds to
$|p(z_0,z)|^{-2} d\sigma(z_0,z)$.

 Notation already defined for $d$ variables will retain its meaning,
 while we will use the following notation for certain $d+1$-variable
 objects:
\[
\begin{aligned}
Y_j &= \{(\al_0,\al) \in \ints_{+}^{d+1}: \al_j < n_j\} \\
Y_S &= \bigcup_{j \in S} Y_j \text{ for } S \subset \{0,1,\dots,d\}
\end{aligned}
\]

We also find it convenient to use interval notation for subsets of
integers (as opposed to real numbers):
\[
\begin{aligned}
(a,b) &= \{k\in \ints: a < k < b\} \\
[a,b) &= \{ k \in \ints: a\leq k < b\}, \text{ etc..} \\
\end{aligned}
\]
We never make use of intervals of real numbers, so there should be no
confusion. 

Now, let $S\sqcup T$ be a partition of $\{1,\dots, d\}$, and let $T_0
= T\cup\{0\}$.  We will prove that
\[
L^2_{\mu}(Y_{S} \ominus (Y_{T_0} \cap Y_S))
\]
is closed under multiplication by $z_0$.  This is enough to prove the
proposition.

\begin{proof}[Proof of Proposition \ref{prop-inv}]
For each $z_0 \in \T$, let
$d\mu_{z_0}(z)$ be the measure on $\T^d$
\[
d\mu_{z_0}(z) = \frac{1}{|p(z_0,z)|^2} d\sigma(z).
\]

Let 
\[
\Gamma_{z_0} (z,\zeta)
\]
denote the reproducing kernel for $L^2_{\mu_{z_0}}(X_T \ominus
(X_T\cap X_S))$, 
and let
\[
\Delta_{z_0}(z,\zeta)
\]
denote the reproducing kernel for $L^2_{\mu_{z_0}}(X_S)$.

By Proposition \ref{prop-orth},
\begin{multline*}
(p(z_0,z)\overline{p(z_0,\zeta)} - \tilde{p}(z_0,z)\overline{
  \tilde{p}(z_0,\zeta)}) \Sz_{d}(z,\zeta)\\
= \Gamma_{z_0}(z,\zeta) +
\Delta_{z_0}(z,\zeta).
\end{multline*}

The left hand side is a trigonometric polynomial in $z_0$ of degree at
most $n_0$, while $\Delta_{z_0}(z,\zeta)$ as a function of $z$ is the
only function on the right hand side with any Fourier support in $X_S
\setminus X_T$. This means the coefficients of $z^\al$ in
$\Delta_{z_0}$ for $\al \in X_S\setminus X_T$ are trig polynomials
with respect to $z_0$; i.e.
\begin{align} \label{deltasupport}
\Delta_{z_0}(z,\zeta)  \in & L^2_{\mu}(\ints\times (X_S\cap X_T) \cup
      [-n_0,n_0]\times (X_S\setminus X_T)) \\
&= L^2_{\mu}(\ints\times (X_S\cap X_T) \cup
      [-n_0,n_0]\times X_S) \nonumber
\end{align}

(Perhaps it needs to be explicitly stated that $\Delta_{z_0}(z,\zeta)$
is actually in $L^2(\T^{d+1})$ as a function of $z=(z_0,z) \in
\T^{d+1}$.  See Lemma \ref{regularity} below.)

Define for each $Z=(\zeta_0,\zeta) \in \D^{d+1}$
\begin{equation} \label{def-L}
L_Z (z_0, z) = L((z_0,z),Z) = \frac{\bar{z_0} \zeta_0}{1-\bar{z_0}\zeta_0}
\Delta_{z_0} (z,\zeta).
\end{equation}
By \eqref{deltasupport} and \eqref{def-L}, 
\[
L_Z \in L^2_{\mu}(\ints \times (X_S\cap
X_T) \cup (-\infty,n_0) \times X_S). 
\]

Now, let $f \in L^2_{\mu}(\ints\times X_S)$, then for each $Z =
(\zeta_0, \zeta) \in \D^{d+1}$
\begin{align}
\ip{f}{L_Z}_{\mu} &= \int_{\T} \int_{\T^d} f(z_0,z) \overline{\Delta_{z_0}
  (z,\zeta)} d\mu_{z_0}(z) \frac{z_0\bar{\zeta_0}}{1-z_0\bar{\zeta_0}}
d\sigma(z_0) \label{Lcomp1}\\ &= \int_{\T} f(z_0,\zeta)
\frac{z_0\bar{\zeta_0}}{1-z_0\bar{\zeta_0}}
d\sigma(z_0) \label{Lcomp2}\\ &= \sum_{\al_0=-\infty}^{-1} \sum_{\al
  \geq 0} \hat{f}(\al_0,\al) \bar{\zeta_0}^{-\al_0}
\zeta^\al; \label{Lcomp3}
\end{align}
the equality \eqref{Lcomp1} is by definition, \eqref{Lcomp2} is
because $\Delta_{z_0}$ is a reproducing kernel for $X_S$ with respect
to $\mu_{z_0}$, and \eqref{Lcomp3} is a Fourier series computation.
If
\[
\begin{aligned}
f &\in L^2_{\mu}(\ints \times X_S)  \text{ and }\\
f & \perp L^2_{\mu}(\ints \times (X_S\cap X_T) \cup (-\infty,n_0) \times
X_S)
\end{aligned}
\]
then $f \perp L_Z$ and therefore the expression in \eqref{Lcomp3} is
zero which implies $f \in L^2_{\mu}(\ints_{+} \times X_S) =
L^2_{\mu}(Y_S)$.  Hence, by Lemma \ref{lem-comple}
\begin{equation} \label{eq768}
L^2_{\mu}(\ints \times X_S) \ominus L^2_{\mu}(\ints \times (X_S\cap
X_T) \cup (-\infty,n_0) \times X_S)
\end{equation}
is unchanged if we intersect all sets with $Y_S$. This proves
\eqref{eq768} equals
\begin{equation} \label{eq774}
 L^2_{\mu}(Y_S)
\ominus L^2_{\mu}(Y_S \cap Y_{T_0})
\end{equation}
where we are using the facts that
\[
(\ints \times X_S) \cap Y_S = Y_S
\]
and
\[
\begin{aligned}
&(\ints \times (X_S\cap X_T) \cup (-\infty,n_0) \times X_S) \cap Y_S 
\\ &= (Y_S\cap Y_T) \cup (Y_S \cap Y_{\{0\}})
\\ &= Y_S\cap Y_{T_0}.
\end{aligned}
\]
This proves
\[
L^2_{\mu}(Y_S \ominus (Y_S\cap Y_{T_0})) \perp L^2_{\mu}((-\infty,0)
\times X_S)
\]
since \eqref{eq768} $=$ \eqref{eq774} and since
\[
(-\infty, 0) \times X_S \subset \ints \times (X_S\cap X_T) \cup
(-\infty,n_0) \times (X_S\setminus X_T).
\]
This is enough to show $L^2_{\mu}(Y_S\ominus (Y_S\cap Y_{T_0}))$
is closed under multiplication by $z_0$, as follows.

Let $f \in L^2_{\mu}(Y_S\ominus (Y_S\cap Y_{T_0}))$.  By Proposition
\ref{prop-inv0}, it is clear that $z_0f \in L^2_{\mu}(Y_S)$.  To show
$z_0f \perp L^2_{\mu}(Y_S \cap Y_{T_0})$, let $(\al_0,\al) \in Y_S\cap
Y_{T_0}$. If $\al_0 >0$ then $(\al_0-1,\al) \in Y_S\cap Y_{T_0}$ in
which case
\begin{equation} \label{whichcase}
\ip{z_0f}{z_0^{\al_0} z^{\al}}_{\mu} = \ip{f}{z_0^{\al_0-1}
  z^{\al}}_{\mu} = 0.
\end{equation}
If $\al_0 = 0$, then $(\al_0-1,\al) \in (-\infty,0)\times X_S$ in
which case we again have \eqref{whichcase} because $f \perp
L^2_{\mu}((-\infty,0) \times X_S)$. Hence, $z_0f \in
L^2_{\mu}(Y_S\ominus (Y_S\cap Y_{T_0}))$, proving that this subspace
is closed under multiplication by $z_0$.
\end{proof}

We used the following lemma in the above proof.

\begin{lemma} \label{regularity}
Let $X \subset \ints^{d}_{+}$, $\zeta \in \D^{d}$.  The reproducing
kernel of $L^2_{\mu_{z_0}} (X)$, written $K_{z_0}(X)(z,\zeta)$ is in
$L^2(\T^{d+1})$ as a function of $(z_0,z)$.
\end{lemma}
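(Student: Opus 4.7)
The plan rests on two key observations. First, since $p$ has no zeros on the compact set $\overline{\D^{d+1}}$, there exist constants $0 < c \le C$ with $c \le |p(z_0,z)|^2 \le C$ on $\T^{d+1}$; hence the norms $\|\cdot\|_{\mu_{z_0}}$ and $\|\cdot\|_{L^2(\sigma)}$ on $L^2(\T^d)$ are uniformly equivalent in $z_0\in\T$. Second, from the proof of Proposition \ref{prop-kernel} the reproducing kernel of the larger subspace $L^2_{\mu_{z_0}}(\ints^d_+)$ is $\tilde K_{z_0}(z,\zeta) := p(z_0,z)\overline{p(z_0,\zeta)}\Sz_d(z,\zeta)$, and $K_{z_0}(X)(\cdot,\zeta)$ is the $L^2(\mu_{z_0})$-projection of $\tilde K_{z_0}(\cdot,\zeta)$ onto $L^2_{\mu_{z_0}}(X)$. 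Therefore
\[
K_{z_0}(X)(\zeta,\zeta) = \|K_{z_0}(X)(\cdot,\zeta)\|_{\mu_{z_0}}^2 \leq \|\tilde K_{z_0}(\cdot,\zeta)\|_{\mu_{z_0}}^2 = \tilde K_{z_0}(\zeta,\zeta),
\]
which is bounded in $z_0\in\T$ for each fixed $\zeta\in\D^d$. Combined with the norm equivalence, $\|K_{z_0}(X)(\cdot,\zeta)\|_{L^2(\sigma)}^2 \leq c^{-1} K_{z_0}(X)(\zeta,\zeta)$ is bounded by some $M<\infty$ uniformly in $z_0$, so integrating over $\T$ will yield the desired $L^2(\T^{d+1})$ bound \emph{provided} one has joint measurability.

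Joint measurability of $(z_0,z) \mapsto K_{z_0}(X)(z,\zeta)$ is the main technical obstacle, and I would handle it by truncation. Set $X_N := \{\alpha \in X : \alpha_j \leq N \text{ for all } j\}$, a finite subset with $X_N \uparrow X$. For fixed $N$, $L^2_{\mu_{z_0}}(X_N)$ is finite-dimensional with basis $\{z^\alpha\}_{\alpha\in X_N}$, and its reproducing kernel has the standard explicit formula involving these monomials and the inverse of the Gram matrix $G_N(z_0)_{\alpha,\beta}=\ip{z^\alpha}{z^\beta}_{\mu_{z_0}}$. The entries of $G_N(z_0)$ are smooth in $z_0$ (integrals of smooth functions against Lebesgue measure), and $G_N(z_0)$ is invertible for every $z_0$ by linear independence of the monomials; compactness of $\T$ then guarantees $G_N(z_0)^{-1}$ has uniformly bounded smooth entries, so $K_{z_0}(X_N)(z,\zeta)$ is jointly continuous in $(z_0,z)$.

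Finally, since $L^2_{\mu_{z_0}}(X_N) \uparrow L^2_{\mu_{z_0}}(X)$ for each $z_0$, the orthogonal projections converge strongly, so $K_{z_0}(X_N)(\cdot,\zeta) \to K_{z_0}(X)(\cdot,\zeta)$ in $L^2(\mu_{z_0})$, hence also in $L^2(\sigma)$ by the norm equivalence. The uniform $L^2(\sigma)$-bound from the first paragraph dominates, so dominated convergence in $z_0$ produces $K_{z_0}(X_N) \to K_{z_0}(X)$ in $L^2(\T^{d+1})$; the limit therefore lies in $L^2(\T^{d+1})$ as required.
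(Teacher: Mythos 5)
Your proof is correct and reaches the conclusion by a genuinely different route than the paper. The paper works directly with the infinite Gram matrix $C_X(z_0) = (C_{\al-\beta}(z_0))_{\al,\beta\in X}$, shows it is uniformly bounded above and below on $\ell^2(X)$ (with the same constants you extract from $\inf|p|$ and $\sup|p|$), and then writes the reproducing kernel explicitly as the double sum $\sum_{\al,\beta\in X} B_{\beta,\al}(z_0)z^\al\bar\zeta^\beta$ with $B = C_X^{-1}$; the uniform operator bound on $C_X^{-1}$ immediately yields the $L^2(\T^{d+1})$ estimate. You instead sidestep the infinite-dimensional inverse entirely by noting that $K_{z_0}(X)(\cdot,\zeta)$ is the orthogonal projection of the full-space kernel $p(z_0,z)\overline{p(z_0,\zeta)}\Sz_d(z,\zeta)$ onto $L^2_{\mu_{z_0}}(X)$, which gives the uniform-in-$z_0$ diagonal bound directly, and you recover measurability by truncating to finite $X_N$ (where everything is an explicit continuous finite-dimensional formula) and passing to the limit via strong convergence of projections and dominated convergence. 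The paper's argument is shorter and produces an explicit kernel formula, but is terse on joint measurability in $(z_0,z)$; your argument is softer, slightly longer, and addresses that point explicitly. One small slip: in $\|K_{z_0}(X)(\cdot,\zeta)\|_{L^2(\sigma)}^2 \le c^{-1}K_{z_0}(X)(\zeta,\zeta)$ the constant should be $\sup_{\T^{d+1}}|p|^2$ rather than $(\inf|p|^2)^{-1}$, since $d\sigma = |p|^2\,d\mu_{z_0}$; this does not affect the argument, which only needs some finite constant independent of $z_0$.
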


\begin{proof} 
For each $\al \in \ints^{d}$, let
\[
C_{\al}(z_0) = \int_{\T^{d}} \frac{z^\al}{|p(z_0,z)|^2}
d\sigma(z)
\]
and define the following (generally infinite) self-adjoint matrix
indexed by $X$
\[
C_X(z_0) = (C_{\al-\beta} (z_0))_{\al, \beta \in X}.
\]
The entries of $C_X(z_0)$ are clearly continuous on $\T$.
Since $|p|$ is bounded above and below on the circle, it turns out
$C_X(z_0)$ is bounded above and below as an operator on $\ell^2(X)$.
Indeed, for $(v_\al) \in \ell^2(X)$
\[
\sum_{\al, \beta \in X}
C_{\al-\beta}(z_0)v_\al \bar{v_\beta}  = \int_{\T^{d}} \frac{|\sum_{\al \in X}
  v_\al z^\al|^2}{|p(z_0,z)|^2} d\sigma(z)
\]
is bounded above and below by
\[
\int_{\T^{d}} |\sum_{\al \in X}
  v_\al z^\al|^2 d\sigma(z) = \sum_{\al \in X} |v_\al|^2
\]
with constants $c_1 = (\inf_{\T^{d+1}} |p|)^{-2}$ and $c_2= (\sup_{\T^{d+1}}
|p|)^{-2}$ respectively.

Let 
\[
B_{\al,\beta}(z_0) = (C_X(z_0))^{-1}_{\al,\beta}
\]
be the $(\al, \beta)$ entry of the inverse of $C_X(z_0)$.  The
reproducing kernel $K_{z_0}(X)(z,\zeta)$ can be given explicitly as
\[
K_{z_0}(X)(z,\zeta) = \sum_{\al, \beta \in X} B_{\beta,\al}(z_0)
z^\al (\bar{\zeta})^{\beta}.
\]
The proof of this fact is a direct computation; if $\gamma \in X$,
then
\[
\ip{z^\gamma}{\sum_{\al, \beta \in X} B_{\beta,\al}(z_0) z^\al
  (\bar{\zeta})^{\beta}}_{\mu_{z_0}} = \sum_{\al ,\beta \in X}
C_{\gamma - \al}(z_0) B_{\al,\beta}(z_0) \zeta^{\beta} =
\zeta^{\gamma}.
\]

Since $C_X(z_0)$ is bounded above and below, 
\[
\sum_{\al \in X} (\sum_{\beta \in X} B_{\al,\beta}(z_0)
(\bar{\zeta})^{\beta}) z^{\al}
\]
is in $L^2(\T^{d+1})$ as a function of $(z_0,z)$ for each $\zeta \in
\D^{d}$.
\end{proof}

\section{Proof of Theorem \ref{mainthm}}

So far we have shown (in Prop. \ref{prop-orth})
\[
L^2_{\mu}(B) = L^2_{\mu}(X_T) \oplus L^2_{\mu}(X_S \ominus (X_S\cap X_T))
\]
for each partition $S\sqcup T = \{1,\dots, d\}$.  In addition,
$L^2_{\mu}(X_S)$ and $L^2_{\mu}(X_S \ominus (X_S\cap X_T))$ are closed
under multiplication by all variables $z_j$ for $j \in T$ and
$L^2_{\mu}(X_S)$ is maximal among subspaces with this property
(Propositions \ref{prop-inv0} and \ref{prop-inv}). 

Theorem \ref{mainthm} now reduces to bookkeeping and facts about
reproducing kernels.  Namely, a kernel is a $\mcp$-kernel if it is the
reproducing kernel for a closed subspace of $L^2_{\mu}(B)$ (Lemma
\ref{lem-pkernel}).  For a nonempty $S \subset \{1,\dots, d\}$, set $T
= \{1,\dots, d\} \setminus S$ and let
\begin{itemize}
\item $K_S$ be the reproducing kernel
for $L^2_{\mu}(X_T)$ and 
\item $L_S$ be the reproducing kernel for
$L^2_{\mu}(X_T\ominus (X_S\cap X_T))$
\end{itemize}
(these definitions look like $S$ and $T$ have been mistakenly switched
but they have not).  Both $K_S$ and $L_S$ are $S$-contractive
$\mcp$-kernels by Lemma \ref{lem-jcont} and Propositions
\ref{prop-inv0} and \ref{prop-inv}.

By Proposition \ref{prop-orth} we have
\[
\mcp = K_S + L_T.
\]  

To prove the maximality property of $K_S$, suppose $\mcp \geq K \geq
0$ for some $S$-contractive kernel $K$.  By Lemmas
\ref{lem-sum} and \ref{lem-shift} below, $z^\al K_\zeta \in
L^2_{\mu}(B)$ for all $\zeta \in \D^d$ and all $\al \geq 0$ satisfying
$\al_j = 0$ for $j \notin S$.  By Proposition \ref{prop-inv0},
$K_\zeta \in L^2_{\mu}(X_T)$ and therefore by Lemma \ref{lem-dom},
$K_S$ must dominate $K$:
\[
K_S \geq K.
\]

This completes the proof of Theorem \ref{mainthm}.

\section{Proof of Theorem \ref{schurthm}}
We have already proven the theorem for rational inner functions which
are regular on $\cpd$, since such functions can always be represented
by $f = \tilde{p}/p$ where $p \in \C[z]$ with no zeros on $\cpd$.
Namely, we have by Theorem \ref{mainthm}
\[
\frac{1-f(z)\overline{f(\zeta)}}{\prod_{j=1}^{d} (1-z_j\bar{\zeta_j})}
= \frac{K_S(z,\zeta)}{p(z)\overline{p(\zeta)}} +
\frac{L_T(z,z)}{p(z)\overline{p(\zeta)}}.
\]
Let us agree to absorb the denominators into the definitions of $K_S$
and $L_T$ so that we really have the formula
\[
\frac{1-f(z)\overline{f(\zeta)}}{\prod_{j=1}^{d} (1-z_j\bar{\zeta_j})} =
K_S(z,\zeta) + L_T(z,\zeta).
\]
By Theorem \ref{mainthm}, $K_S + L_T = K_T + L_S$ and by maximality of
$K_S, K_T$ among $S$ and $T$-contractive $\mcp$-kernels, respectively,
we have
\[
K_S- L_S = K_T -L_T \geq 0
\]
and
\[
K_S \geq K_{S'} \text{ for } S \subset S'.
\]

To prove the theorem for a general holomorphic function $f: \D^d \to
\D$, we use a theorem of Rudin (\cite{wR69} Theorem 5.5.1) which says
that such $f$ can be approximated uniformly on compact subsets of
$\D^d$ by rational inner functions, regular on $\cpd$.  So, say $f_k
\to f$ uniformly on compacta, with each $f_k$ rational, inner, and
continuous up to $\cpd$. We have corresponding decompositions:
\[
\frac{1-f_k(z)\overline{f_k(\zeta)}}{\prod_{j=1}^{d}(1-z_j
  \bar{\zeta_j})} = K^{(k)}_S(z,\zeta) + L^{(k)}_T(z,\zeta).
\]
Since
\[
|K^{(k)}_S(z,\zeta)|^2 \leq K^{(k)}_{S}(z,z) K^{(k)}_{S}(\zeta,\zeta) \leq
\frac{1}{\prod_{j= 1}^{d} (1-|z_j|^2)(1-|\zeta_j|^2)}
\]
(with $L^{(k)}_T$ satisfying a similar estimate), we see that the
$K^{(k)}_S$'s and $L^{(k)}_T$'s are holomorphic on $\D^d\times \D^d$
and locally uniformly bounded and hence they are in a normal
family. Taking subsequences, we may assume $K^{(k)}_S$ converges to
some $K_S$ and $L^{(k)}_{T}$ converges to some $L_T$ locally
uniformly.  Positive semi-definiteness, $S$ and $T$ contractivity, and
the identities/inequalities
\[
K_S- L_S = K_T -L_T \geq 0
\]
\[
K_S \geq K_S' \text{ for } S \subset S'
\]
are all preserved under such limits.

Therefore we conclude that
\[
\frac{1-f(z)\overline{f(\zeta)}}{\prod_{j=1}^{d} (1-z_j\bar{\zeta_j})} =
K_S(z,\zeta) + L_T(z,\zeta)
\]
is a valid decomposition.
 
\section{Reproducing Kernel Appendix}
We record a number of facts about reproducing kernels which we used
above.  We are sketchy since much of this is well-known.  For general
references see \cite{AM02} and \cite{BB84}.  As before, $\mcp$ is the
reproducing kernel for $L^2_{\mu} (B)$, where $d\mu = |p|^{-2}d\sigma$
and $B = \{\al \in \ints_{+}^d: \al \ngeq n\}$. (These details are by
no means essential for what follows.)  

\begin{lemma} \label{lem-basic}
 A function $f:\D^d \to \C$ is in a reproducing kernel Hilbert
 function space $\mathcal{H}$ on $\D^d$ with kernel $K$ if and only if
\[
K(z,\zeta) \geq \epsilon f(z)\overline{f(\zeta)}
\]
for some $\epsilon >0$.  The largest possible $\epsilon$ is equal to
$||f||^{-2}$. 
\end{lemma}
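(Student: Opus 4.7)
The plan is to prove both implications directly from the reproducing property and Cauchy--Schwarz, with the optimal $\epsilon$ falling out of the two bounds.

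For the forward direction, assume $f \in \mathcal{H}$. For any finite set of points $z_1,\dots,z_n \in \D^d$ and any scalars $c_1,\dots,c_n \in \C$, I would use the reproducing identity $f(z_i) = \ip{f}{K_{z_i}}$ together with Cauchy--Schwarz:
\[
\Bigl|\sum_i c_i f(z_i)\Bigr|^2 = \Bigl|\Bigl\langle f,\ \sum_i \bar{c_i} K_{z_i}\Bigr\rangle\Bigr|^2 \leq \|f\|^2 \Bigl\|\sum_i \bar{c_i} K_{z_i}\Bigr\|^2.
\]
Expanding the right-hand norm via $\ip{K_{z_i}}{K_{z_j}} = K(z_j,z_i)$ yields exactly the matrix inequality $(K(z_i,z_j))_{i,j} \geq \|f\|^{-2} (f(z_i)\overline{f(z_j)})_{i,j}$ on every finite set, which is the pointwise definition of $K \geq \|f\|^{-2} f \bar{f}$. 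So one may take $\epsilon = \|f\|^{-2}$.

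For the reverse direction, assume $K \geq \epsilon f\bar{f}$ in the kernel sense. Let $\mathcal{D} \subset \mathcal{H}$ denote the dense subspace spanned by the kernel sections $K_{z}$, and define a conjugate-linear functional
\[
\Phi\Bigl(\sum_i \bar{c_i} K_{z_i}\Bigr) := \sum_i \bar{c_i} f(z_i).
\]
The hypothesis unpacks to say exactly that
\[
|\Phi(v)|^2 \leq \epsilon^{-1}\, \|v\|^2 \quad \text{for every } v = \sum_i \bar{c_i}K_{z_i} \in \mathcal{D},
\]
which both shows $\Phi$ is well-defined (it vanishes whenever $v=0$) and that it is bounded with norm at most $\epsilon^{-1/2}$. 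By the Riesz representation theorem there is a unique $g \in \mathcal{H}$ with $\|g\|^2 \leq 1/\epsilon$ representing $\Phi$, and evaluating at $v = K_z$ gives $g(z) = f(z)$ for each $z$, so $f = g \in \mathcal{H}$ with $\|f\|^2 \leq 1/\epsilon$.

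Combining the two bounds gives $\sup\{\epsilon : K \geq \epsilon f\bar{f}\} = \|f\|^{-2}$, yielding the final claim. The only point needing care is the well-definedness of $\Phi$, but this is automatic from the same kernel inequality that gives its boundedness, so there is no real obstacle beyond bookkeeping between the kernel-positivity convention and the reproducing-kernel identities $\ip{K_\zeta}{K_z} = K(z,\zeta)$.
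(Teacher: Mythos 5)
The paper does not prove this lemma; it simply cites Theorem~2.2 of \cite{BB84}. Your argument is the standard one for this characterization --- Cauchy--Schwarz with the reproducing property for necessity, boundedness of a functional on the span of kernel sections plus Riesz representation for sufficiency --- and it is structurally sound. The forward direction is correct as written, and combining the two bounds does indeed identify the optimal $\epsilon$ as $\|f\|^{-2}$.

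There is, however, one conjugation slip in the reverse direction that needs fixing. As you wrote it, $\Phi\bigl(\sum_i \bar{c_i} K_{z_i}\bigr) = \sum_i \bar{c_i} f(z_i)$ is a \emph{linear} functional of its argument $v=\sum_i \bar{c_i}K_{z_i}$ (set $a_i:=\bar{c_i}$ and you get $\Phi(\sum a_i K_{z_i})=\sum a_i f(z_i)$), not conjugate-linear as you assert. Applying Riesz to a linear $\Phi$ gives $g$ with $\Phi(v)=\ip{v}{g}$, so at $v=K_z$ you get $f(z)=\Phi(K_z)=\ip{K_z}{g}=\overline{g(z)}$, which shows $\bar{f}\in\mathcal{H}$ rather than $f\in\mathcal{H}$ --- not the same thing for a space of holomorphic functions. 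To get what you want, make $\Phi$ genuinely conjugate-linear by writing $\Phi\bigl(\sum_i c_i K_{z_i}\bigr):=\sum_i \bar{c_i} f(z_i)$; then the Riesz vector satisfies $\Phi(v)=\ip{g}{v}$, so $g(z)=\ip{g}{K_z}=\Phi(K_z)=f(z)$. (Equivalently, keep $\Phi$ linear but conjugate $f$: $\Phi\bigl(\sum_i c_i K_{z_i}\bigr):=\sum_i c_i \overline{f(z_i)}$.) The boundedness estimate $|\Phi(v)|^2\le \epsilon^{-1}\|v\|^2$ and the well-definedness argument go through unchanged with either fix, and then the rest of your proof, including $\|f\|^2\le 1/\epsilon$, is correct.
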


See Theorem 2.2 in \cite{BB84}.

\begin{lemma} \label{lem-sum} 
Let $K$ be a positive semi-definite kernel on $\D^d$, and let $f$ be a
finite linear combination of functions of the form $K_{\eta}(z) :=
K(z,\eta)$.  Then, there is an $\epsilon > 0$ such that
\[
K(z,\zeta) \geq \epsilon f(z)\overline{f(\zeta)}.
\]
In the case of a single kernel function we can say 
\[
K(z,\zeta) \geq \epsilon K_\eta(z) \overline{K_{\eta} (\zeta)}
\]
if and only if $1 \geq \epsilon K(\eta,\eta)$.
\end{lemma}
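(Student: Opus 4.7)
The plan is to deduce Lemma \ref{lem-sum} directly from Lemma \ref{lem-basic} by observing that the relevant function $f$ lies in the reproducing kernel Hilbert space $\mathcal{H}$ associated to $K$, and then computing or estimating its norm.

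For the first assertion, I would note that each kernel section $K_\eta$ is an element of $\mathcal{H}$, indeed with squared norm
\[
\|K_\eta\|^2 = \ip{K_\eta}{K_\eta}_{\mathcal{H}} = K_\eta(\eta) = K(\eta,\eta)
\]
by the reproducing property. Since $f$ is a finite linear combination of such kernel sections, it too belongs to $\mathcal{H}$. Lemma \ref{lem-basic} then immediately produces an $\epsilon>0$ (any value with $\epsilon \leq \|f\|^{-2}$) such that $K(z,\zeta) \geq \epsilon f(z)\overline{f(\zeta)}$.

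For the second assertion, I apply the same reasoning to the single kernel section $f=K_\eta$. Lemma \ref{lem-basic} tells us that the inequality
\[
K(z,\zeta) \geq \epsilon K_\eta(z) \overline{K_\eta(\zeta)}
\]
holds precisely when $\epsilon \leq \|K_\eta\|^{-2}$, that is $\epsilon K(\eta,\eta) \leq 1$. The only thing to handle separately is the degenerate case $K(\eta,\eta)=0$, which by Cauchy--Schwarz in $\mathcal{H}$ forces $K_\eta \equiv 0$, so every $\epsilon>0$ works and the stated equivalence still holds vacuously.

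There is no real obstacle here; the content of the lemma is essentially a translation of the standard RKHS fact in Lemma \ref{lem-basic} into the factorized form used later in the paper. The only mild care needed is in the norm computation $\|K_\eta\|^2 = K(\eta,\eta)$ and the handling of the degenerate case.
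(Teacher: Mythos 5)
Your proof is correct and is precisely the deduction from Lemma \ref{lem-basic} (via $K_\eta \in \mathcal{H}$ with $\|K_\eta\|^2 = K(\eta,\eta)$) that the paper has in mind; the paper simply omits the proof as routine, and your handling of the degenerate case $K(\eta,\eta)=0$ is a reasonable extra care.
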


\begin{proof} Omitted. \end{proof}

\begin{lemma} \label{lem-pchar} 
A positive semi-definite kernel $K$ satisfying $\mcp \geq K$ is a
$\mcp$-kernel (as in Definition \ref{def-pkernel}) if and only if for
every function $f:\D^d \to \C$
\[
K(z,\zeta) \geq \epsilon f(z)\overline{f(\zeta)}
\]
implies 
\[
K(z,\zeta) \geq \frac{f(z)\overline{f(\zeta)}}{||f||^2_{\mu}}
\]
in which case we necessarily have $||f||^{-2}_{\mu} \geq \epsilon$.
In particular, $K(\zeta,\zeta) = ||K_\zeta||_\mu^2$ holds for all
$\zeta \in \D^d$ whenever $K$ is a $\mcp$-kernel.  (Here $K_\zeta(z) =
K(z,\zeta)$.)
\end{lemma}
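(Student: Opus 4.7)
The plan is to interpret the $\mcp$-kernel property as a normalization statement using Lemma \ref{lem-basic}: since $\mcp$ is the reproducing kernel of $L^2_\mu(B)$ (Proposition \ref{prop-kernel}), an inequality $\mcp(z,\zeta) \geq \epsilon f(z)\overline{f(\zeta)}$ is equivalent to $f \in L^2_\mu(B)$ with $\|f\|_\mu^{-2} \geq \epsilon$. So rescaling $f$ by its $\mu$-norm converts a ``some $\epsilon$'' hypothesis into the canonical ``largest $\epsilon$'' statement, and the whole lemma becomes a matter of arranging this rescaling carefully.

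For the forward direction, suppose $K$ is a $\mcp$-kernel and $K \geq \epsilon f(z)\overline{f(\zeta)}$. Since $\mcp \geq K$, Lemma \ref{lem-basic} applied to $L^2_\mu(B)$ gives $f \in L^2_\mu(B)$ with $\|f\|_\mu^{-2} \geq \epsilon$ (which is the ``necessarily'' clause). Assuming $f \not\equiv 0$, normalize to $g := f/\|f\|_\mu$: then $\|g\|_\mu = 1$ forces $\mcp \geq g(z)\overline{g(\zeta)}$ by Lemma \ref{lem-basic}, while $K \geq (\epsilon \|f\|_\mu^2) g(z)\overline{g(\zeta)}$ with strictly positive constant. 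Definition \ref{def-pkernel} applied to $g$ then yields $K \geq g(z)\overline{g(\zeta)} = f(z)\overline{f(\zeta)}/\|f\|_\mu^2$. Conversely, assume $K$ has the stated property. If $\mcp \geq f(z)\overline{f(\zeta)}$ (which forces $\|f\|_\mu \leq 1$ by Lemma \ref{lem-basic}) and $K \geq \epsilon f(z)\overline{f(\zeta)}$, then the hypothesis delivers $K \geq f(z)\overline{f(\zeta)}/\|f\|_\mu^2 \geq f(z)\overline{f(\zeta)}$, verifying Definition \ref{def-pkernel}.

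For the ``in particular'' identity $K(\zeta,\zeta) = \|K_\zeta\|_\mu^2$, apply the just-proven characterization with $f = K_\zeta$. By Lemma \ref{lem-basic} for the RKHS of $K$, the largest $\epsilon$ with $K \geq \epsilon K_\zeta(z)\overline{K_\zeta(\zeta)}$ is $1/K(\zeta,\zeta)$; the characterization therefore forces $K \geq K_\zeta(z)\overline{K_\zeta(\zeta)}/\|K_\zeta\|_\mu^2$, which by maximality gives $K(\zeta,\zeta) \leq \|K_\zeta\|_\mu^2$. The reverse inequality comes from $\mcp \geq K \geq K_\zeta(z)\overline{K_\zeta(\zeta)}/K(\zeta,\zeta)$ combined with Lemma \ref{lem-basic} applied to $\mcp$, giving $\|K_\zeta\|_\mu^2 \leq K(\zeta,\zeta)$.

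The only conceptual subtlety — and the step I expect to require the most care in the write-up — is the bookkeeping between two distinct norms on a single vector $K_\zeta$: its intrinsic RKHS norm with respect to $K$ (which equals $K(\zeta,\zeta)^{1/2}$) and its ambient $L^2(\mu)$ norm. The $\mcp$-kernel hypothesis is precisely what forces these norms to coincide, so the stated identity is really the assertion that the RKHS of a $\mcp$-kernel sits \emph{isometrically}, not merely contractively, inside $L^2_\mu(B)$.
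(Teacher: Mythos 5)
Your argument is correct and takes the same route the paper intends: the paper's entire proof of this lemma is the one-line remark that it ``follows from the definition of a $\mcp$-kernel and Lemma \ref{lem-basic},'' and the normalization $g = f/\|f\|_\mu$ together with the fact that $\mcp$ reproduces $L^2_\mu(B)$ (so the largest admissible $\epsilon$ for $\mcp \geq \epsilon f\bar f$ is exactly $\|f\|_\mu^{-2}$) is precisely how those two ingredients combine. Your closing paragraph also correctly identifies the content of the ``in particular'' clause as isometric (not merely contractive) embedding of the RKHS of $K$ into $L^2_\mu(B)$, which is the right way to think about it.
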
  

\begin{proof} Follows from the definition of a $\mcp$-kernel and
  Lemma \ref{lem-basic}.
 \end{proof}

\begin{lemma} \label{lem-dom} Suppose $\mcp \geq K \geq 0$.  Let
  $\mathcal{H} = \vee \{K_{\zeta}: \zeta \in \D^d\}$ be the closed
  span in $L^2_{\mu}(B)$ of the functions $K_{\zeta}(z) = K(z,\zeta)$,
  and let $L$ be the reproducing kernel for $\mathcal{H}$.  Then,
  $L\geq K$.
\end{lemma}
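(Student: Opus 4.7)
The plan is to fix an arbitrary finite configuration $\zeta_1, \dots, \zeta_N \in \D^d$ and scalars $c_1, \dots, c_N \in \C$ and verify the pointwise positivity $\sum_{i,j} c_i \bar{c}_j (L-K)(\zeta_j, \zeta_i) \geq 0$. Three standard ingredients will combine: (a) Aronszajn's fact that $\mcp \geq K \geq 0$ embeds the abstract RKHS $\mathcal{H}_K$ of $K$ contractively into $\mathcal{H}_{\mcp} = L^2_\mu(B)$ (this is where the hypothesis $\mcp \geq K$ enters); (b) the defining reproducing property of $L$ on $\mathcal{H}$ with respect to the $L^2(\mu)$-inner product; and (c) Cauchy--Schwarz in $L^2(\mu)$.

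Concretely, I would set $u := \sum_i c_i K_{\zeta_i}$, viewed both as an element of $\mathcal{H}_K$ and (via (a)) as an element of $L^2_\mu(B)$, and $v := \sum_j c_j L_{\zeta_j} \in \mathcal{H}$. Abbreviate $A := \sum_{i,j} c_i \bar{c}_j K(\zeta_j,\zeta_i)$ and $B := \sum_{i,j} c_i \bar{c}_j L(\zeta_j,\zeta_i)$, so the goal is $A \leq B$. From (a), $\|u\|_\mu^2 \leq \|u\|_K^2 = A$, and from (b), $\|v\|_\mu^2 = B$. The key identity is $\langle u, v\rangle_\mu = A$: each $K_{\zeta_i}$ lies in $\mathcal{H}$ by construction of $\mathcal{H}$, so (b) applied to $K_{\zeta_i}$ at $\zeta_j$ gives $\langle K_{\zeta_i}, L_{\zeta_j}\rangle_\mu = K_{\zeta_i}(\zeta_j) = K(\zeta_j,\zeta_i)$, and summing against $c_i \bar{c}_j$ produces exactly $A$. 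Cauchy--Schwarz in $L^2(\mu)$ then closes the argument:
\[
A \;=\; \langle u, v\rangle_\mu \;\leq\; \|u\|_\mu \, \|v\|_\mu \;\leq\; \sqrt{A}\,\sqrt{B},
\]
whence $A^2 \leq AB$ and so $A \leq B$ (the case $A=0$ being trivial), which is the desired $L - K \geq 0$.

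The only delicate point I anticipate is keeping the direction of the contractive inclusion in (a) straight: one needs $\|u\|_\mu \leq \|u\|_K$, i.e.\ the ``larger kernel, smaller norm'' direction of Aronszajn's sum theorem applied with $\mcp$ dominating $K$. This is a standard fact (see \cite{AM02} or \cite{BB84}) but is easy to invert by mistake in notation. Beyond that, the proof is routine Hilbert-space bookkeeping, and no substantive obstacle is expected.
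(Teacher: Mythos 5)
Your proposal is correct. The paper's printed ``proof'' is just a citation to Corollary 2.6 of Barbian--Burbea/Burbea, but the argument you supply is essentially the standard one that underlies that reference, and it closely parallels the (suppressed) in-line proof in the paper's source: both hinge on the contractive inclusion $\mathcal{H}_K \hookrightarrow L^2_\mu(B)$ coming from $\mcp \geq K$, the reproducing property of $L$ on $\mathcal{H}$ in the $\mu$-inner product, and Cauchy--Schwarz. The one cosmetic difference is how $\|u\|_\mu$ is bounded: the paper's suppressed argument packages the inclusion as a positive contraction $T:\mcp_\zeta \mapsto K_\zeta$ and uses $\|Tv\|_\mu \le \|v\|_\mu$ to get $A = \langle Tv,v\rangle_\mu \le \|v\|_\mu^2 = B$ directly, whereas you bound $\|u\|_\mu \le \|u\|_K = \sqrt{A}$ via Aronszajn and then square out $A \le \sqrt{A}\sqrt{B}$. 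Both are valid and all the pieces in your chain check out (including the key identity $\langle K_{\zeta_i}, L_{\zeta_j}\rangle_\mu = K(\zeta_j,\zeta_i)$, which uses that $K_{\zeta_i}\in\mathcal{H}$ and that $\mathcal{H}$ carries the $\mu$-inner product). Only a trivial notational remark: you reuse the symbol $B$, which already names the lattice set $\{\alpha\in\ints_+^d:\alpha\ngeq n\}$ in the paper, so you would want a different letter in a final write-up.
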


\begin{proof} 
This essentially follows from Corollary 2.6 of \cite{BB84}.
\end{proof}

\iffalse
\begin{proof} First observe that each $K_{\zeta} \in L^2_{\mu}(B)$.
  Next, consider the linear map $T$ which sends
\[
\mcp_{\zeta} \mapsto K_{\zeta}.
\]
We claim this is well-defined and contractive. Indeed, 
\[
\begin{aligned}
|\ip{T(\sum t_{\zeta} \mcp_{\zeta})}{\sum s_{\eta} \mcp_{\eta}}_{\mu}| &=
|\ip{\sum t_{\zeta} K_{\zeta}}{\sum s_{\eta} \mcp_{\eta}}_{\mu}| \\
&= |\sum
t_{\zeta} \bar{s_{\eta}} K(\eta,\zeta)| \\ 
&\leq |\sum
t_{\zeta} \bar{t_{\eta}} K(\eta,\zeta)|^{1/2}|\sum
s_{\zeta} \bar{s_{\eta}} K(\eta,\zeta)|^{1/2} \\
&\leq |\sum
t_{\zeta} \bar{t_{\eta}} \mcp(\eta,\zeta)|^{1/2}|\sum
s_{\zeta} \bar{s_{\eta}} \mcp(\eta,\zeta)|^{1/2} \\
& = ||\sum t_\zeta
\mcp_{\zeta}||_{\mu} ||\sum s_\eta
\mcp_{\eta}||_{\mu}
\end{aligned}
\]
where all sums are taken over an arbitrary finite subset of $\D^d$ and
the $t_{\zeta}$'s and $s_{\eta}$'s are arbitrary scalars.  This
computation shows contractivity and well-definedness all at once.  It
is also easy to see that $T$ is positive semi-definite, by making the
$t_{\zeta}$'s and $s_{\eta}$'s the same scalars in the first line.
Since the range of $T$ is perpendicular to $P_{\zeta} - L_{\zeta}$, it
then follows that $T(P_{\zeta} - L_{\zeta}) = 0$, or equivalently $T
L_{\zeta} = K_{\zeta}$.  Finally,
\begin{multline*}
  \sum t_{\zeta} \bar{t_{\eta}} K(\eta,\zeta) = \ip{T(\sum t_{\zeta}
    L_{\zeta})}{\sum t_{\zeta} L_{\zeta}}_{\mu} \\ \leq ||\sum
  t_{\zeta} L_{\zeta} ||^2 = \sum t_{\zeta} \bar{t_{\eta}}
  L(\eta,\zeta)
\end{multline*}
which proves $K \leq L$.
\end{proof}
\fi

\begin{lemma} \label{lem-pkernel} $K$ is a $\mcp$-kernel if and only
  if $K$ is a reproducing kernel for a closed subspace of
  $L^2_{\mu}(B)$.
\end{lemma}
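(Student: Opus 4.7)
The plan is to prove both directions using the characterization in Lemma \ref{lem-pchar} together with Lemma \ref{lem-basic}, which identifies membership in a reproducing kernel Hilbert space with a rank-one kernel inequality.

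For the ``if'' direction, suppose $K$ is the reproducing kernel of a closed subspace $\mathcal{H} \subset L^2_\mu(B)$. Since the reproducing kernel of an orthogonal direct sum is the sum of the reproducing kernels, writing $L^2_\mu(B) = \mathcal{H} \oplus \mathcal{H}^\perp$ gives $\mcp = K + K'$ where $K' \geq 0$ is the reproducing kernel of $\mathcal{H}^\perp$; hence $\mcp \geq K \geq 0$. For the second condition in Definition \ref{def-pkernel}, suppose $\mcp(z,\zeta) \geq f(z)\overline{f(\zeta)}$ and $K(z,\zeta) \geq \epsilon f(z)\overline{f(\zeta)}$. Two applications of Lemma \ref{lem-basic} place $f$ in $L^2_\mu(B)$ with $\|f\|_\mu \leq 1$ and in $\mathcal{H}$. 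Since $\mathcal{H}$ carries the inherited $L^2_\mu$-norm, $\|f\|_{\mathcal{H}} = \|f\|_\mu \leq 1$, and Lemma \ref{lem-basic} again yields $K(z,\zeta) \geq f(z)\overline{f(\zeta)}/\|f\|_{\mathcal{H}}^2 \geq f(z)\overline{f(\zeta)}$.

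For the ``only if'' direction, let $K$ be a $\mcp$-kernel and let $\mathcal{H}_K$ denote its abstract reproducing kernel Hilbert space with kernel functions $K_\zeta(z) = K(z,\zeta)$. First I would verify that every $K_\zeta$ lies in $L^2_\mu(B)$: by the reproducing property in $\mathcal{H}_K$ we have $K(z,\zeta') \geq K_\zeta(z)\overline{K_\zeta(\zeta')}/K(\zeta,\zeta)$, and combining with $\mcp \geq K$, Lemma \ref{lem-basic} places $K_\zeta$ in $L^2_\mu(B)$. Next, for a finite linear combination $f = \sum_i c_i K_{\zeta_i}$, the key step is the norm-matching $\|f\|_\mu = \|f\|_{\mathcal{H}_K}$. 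The inequality $\|f\|_{\mathcal{H}_K} \leq \|f\|_\mu$ follows by applying Lemma \ref{lem-pchar} to the rank-one bound $K(z,\zeta) \geq f(z)\overline{f(\zeta)}/\|f\|_{\mathcal{H}_K}^2$ (the reproducing property in $\mathcal{H}_K$). For the reverse inequality, $\mcp \geq K \geq f(z)\overline{f(\zeta)}/\|f\|_{\mathcal{H}_K}^2$ together with Lemma \ref{lem-basic} applied to $\mcp$ (the reproducing kernel of $L^2_\mu(B)$) forces $\|f\|_\mu \leq \|f\|_{\mathcal{H}_K}$.

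Having shown that the identity on $\mathrm{span}\{K_\zeta\}$ is a simultaneous isometry for the $\mathcal{H}_K$- and $L^2_\mu$-norms, let $\mathcal{H} := \vee\{K_\zeta : \zeta \in \D^d\}$ denote the closure of this span inside $L^2_\mu(B)$. Since pointwise evaluation at any $\zeta$ is continuous in either norm (using the reproducing property for $\mathcal{H}_K$ on one side and the fact that $L^2_\mu(B)$ is an RKHS via $\mcp$ on the other), the two completions of $\mathrm{span}\{K_\zeta\}$ coincide as function spaces with identical inner products. Polarizing the norm identity recovers the reproducing property of $K$ inside $\mathcal{H}$, so $K$ is the reproducing kernel of the closed subspace $\mathcal{H}$ of $L^2_\mu(B)$. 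The real work sits in the norm-matching step, but even that has essentially been packaged into Lemma \ref{lem-pchar}; once it is in place, the identification of the two completions is routine bookkeeping.
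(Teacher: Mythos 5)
Your proof is correct, and you prove strictly more than the paper does: the paper explicitly \emph{omits} the forward direction ($\mcp$-kernel $\Rightarrow$ reproducing kernel of a closed subspace) on the grounds that it is not used, and only sketches the converse, saying it ``follows from Lemmas \ref{lem-basic} and \ref{lem-pchar} and the fact that the norm on a subspace is the same as the norm in the original space.'' Your ``if'' direction is exactly this sketch filled in---two applications of Lemma \ref{lem-basic} plus the observation that the subspace norm is the inherited $L^2_{\mu}$-norm---so there you are doing essentially what the paper intends.

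For the ``only if'' direction, which the paper omits, you work with the abstract RKHS $\mathcal{H}_K$, verify $K_\zeta \in L^2_\mu(B)$, prove the norm identity $\|f\|_\mu = \|f\|_{\mathcal{H}_K}$ on $\mathrm{span}\{K_\zeta\}$ via Lemma \ref{lem-pchar} together with the ``largest $\epsilon$'' clause of Lemma \ref{lem-basic}, and then pass to closures using continuity of point evaluations. This is a clean and self-contained route. (The author's earlier draft, visible in a commented-out block, instead invokes Lemma \ref{lem-dom} to produce the kernel $L$ of $\vee\{K_\zeta\}$, proves $K(\eta,\eta)=L(\eta,\eta)$ by approximating $L_\eta$, and then expands $\|L_\eta-K_\eta\|_\mu^2=0$; both ideas hinge on the same norm-matching mechanism, but yours avoids introducing the auxiliary kernel $L$.) One small presentational wrinkle: you assert that Lemma \ref{lem-pchar} gives $\|f\|_{\mathcal{H}_K}\leq\|f\|_\mu$ and that Lemma \ref{lem-basic} applied to $\mcp$ gives the reverse; in fact both inequalities fall out of applying Lemma \ref{lem-pchar} with $\epsilon=\|f\|_{\mathcal{H}_K}^{-2}$, since conclusion (1) of that lemma, combined with the sharpness in Lemma \ref{lem-basic}, yields $\|f\|_{\mathcal{H}_K}\leq\|f\|_\mu$, while conclusion (2) directly yields $\|f\|_\mu\leq\|f\|_{\mathcal{H}_K}$. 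It would be worth being explicit about which conclusion you are invoking where, but the argument as a whole is sound.
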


\begin{proof}  
\iffalse
By the previous lemma, if $L$ is the reproducing kernel
  for $\mathcal{H} = \vee \{K_{\zeta}: \zeta \in \D^d\}$, then $L \geq
  K$.  The assumption that $K$ is a $\mcp$-kernel will allow us to
  show $K=L$.  

  Fix $\eta \in \D^d$ and let $F_k$ be a sequence of finite linear
  combinations of $K_{\zeta}$'s that converge to $L_{\eta}$. By Lemma
  \ref{lem-sum}, for each $k$ there is an $\epsilon >0$ such that
\[
K(z,\zeta) \geq \epsilon F_k(z)\overline{F_k(\zeta)}
\]
and by Lemma \ref{lem-pchar} we may strengthen this to
\begin{multline*}
K(z,\zeta) \geq \frac{F_k(z)\overline{F_k(\zeta)}}{||F_k||^2_{\mu}}
= \frac{\ip{F_k}{L_z}\ip{L_{\zeta}}{F_k}}{||F_k||^2_{\mu}}
\end{multline*}
and taking the limit as $k\to \infty$ we get
\[
K(z,\zeta) \geq \frac{L_{\eta}(z)
  \overline{L_{\eta}(\zeta)}}{L(\eta,\eta)}
\]
which in particular implies $K(\eta,\eta) \geq L(\eta,\eta)$.  We
already have the opposite inequality, so 
\begin{equation} \label{KisL}
K(\eta,\eta) =
L(\eta,\eta).
\end{equation}

Since $K$ is a $\mcp$-kernel,
\begin{equation} \label{Knorm}
||K_{\eta}||^2_{\mu} = K(\eta,\eta) \text{ by Lemma \ref{lem-pchar}}.
\end{equation}

Now, if one expands $||L_{\eta} - K_{\eta}||^2_{\mu}$ using
\eqref{KisL} and \eqref{Knorm}, the result is zero, and therefore
$K=L$.  
\fi 

The forward direction is not used in this article, so we
omit the proof.  The converse statement is not difficult; it follows
from Lemmas \ref{lem-basic} and \ref{lem-pchar} and the fact that the
norm on a subspace is the same as the norm in the original space.
\end{proof}

\begin{lemma} \label{lem-shift} 
If a kernel $K$ with $\mcp \geq K$ is $j$-contractive, then 
\[
K(z,\zeta) \geq \epsilon f(z)\overline{f(\zeta)}
\]
implies $f, z_jf \in L^2_{\mu}(B)$.
\end{lemma}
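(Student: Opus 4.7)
The plan is to establish both memberships by reducing everything to a single application of Lemma~\ref{lem-basic} with the kernel $\mcp$ (whose RKHS is $L^2_\mu(B)$). Thus it suffices to produce constants $\epsilon_1, \epsilon_2 > 0$ with
\[
\mcp(z,\zeta) \geq \epsilon_1 f(z)\overline{f(\zeta)} \quad \text{and} \quad \mcp(z,\zeta) \geq \epsilon_2 (z_j f(z))\overline{(\zeta_j f(\zeta))}.
\]

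The first inequality is trivial: chaining the hypothesis $K \geq \epsilon f\bar{f}$ with $\mcp \geq K$ gives $\mcp \geq \epsilon f\bar{f}$, and Lemma~\ref{lem-basic} delivers $f \in L^2_\mu(B)$.

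For $z_j f$, the key device is the Schur (Hadamard) product theorem. The function $(z,\zeta) \mapsto z_j\bar{\zeta_j}$ is a positive semi-definite kernel (it is the rank-one kernel coming from the function $z \mapsto z_j$), so the entrywise product of the two PSD kernels $K - \epsilon f\bar{f}$ and $z_j\bar{\zeta_j}$ is again PSD. This yields
\[
z_j\bar{\zeta_j} K(z,\zeta) \geq \epsilon \, z_j \bar{\zeta_j} f(z)\overline{f(\zeta)} = \epsilon\, (z_j f(z))\overline{(\zeta_j f(\zeta))}.
\]
The $j$-contractivity of $K$ says precisely $K \geq z_j \bar{\zeta_j} K$, so combining this with the previous line and $\mcp \geq K$ gives $\mcp \geq \epsilon (z_j f)\overline{(z_j f)}$, and one more application of Lemma~\ref{lem-basic} places $z_j f$ in $L^2_\mu(B)$.

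I do not anticipate a serious obstacle: the only nontrivial ingredient is the Schur product theorem, and everything else is transitivity of kernel inequalities together with Lemma~\ref{lem-basic}. One could alternatively phrase the argument via multiplier theory, observing that $j$-contractivity makes $z_j$ a contractive multiplier on the RKHS of $K$ while $\mcp \geq K$ contractively embeds that RKHS into $L^2_\mu(B)$, but the direct Schur-product route keeps the proof entirely at the level of kernel inequalities and avoids any separate discussion of the intermediate Hilbert space.
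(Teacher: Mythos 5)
Your argument is correct and follows essentially the same chain of kernel inequalities as the paper's proof: $\mcp \geq K \geq z_j\bar{\zeta_j}K \geq \epsilon\, z_j\bar{\zeta_j} f\overline{f}$, followed by Lemma~\ref{lem-basic}. The only difference is presentational: you explicitly invoke the Schur product theorem to justify multiplying the inequality $K \geq \epsilon f\overline{f}$ by the rank-one kernel $z_j\bar{\zeta_j}$, a step the paper leaves implicit, and you spell out the $f \in L^2_\mu(B)$ conclusion which the paper takes for granted.
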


\begin{proof} By assumption, $(1-z_j\bar{\zeta_j})K(z,\zeta) \geq 0$
 and therefore
\[
\mcp(z,\zeta) \geq K(z,\zeta) \geq z_j \bar{\zeta_j} K(z,\zeta) \geq
\epsilon z_j \bar{\zeta_j} f(z)\overline{f(\zeta)}
\]
which shows $z_j f \in L^2_{\mu}(B)$.
\end{proof}

\begin{lemma} \label{lem-jcont}
 If $\mathcal{H}$ is a closed subspace of $L^2_{\mu}(B)$ and
 $\mathcal{H}$ is closed under multiplication by $z_j$, then the
 reproducing kernel for $\mathcal{H}$ is $j$-contractive.
\end{lemma}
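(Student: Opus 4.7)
The plan is to exploit the well-known reproducing-kernel identity $M_{z_j}^* K_\zeta = \bar{\zeta_j} K_\zeta$, together with the fact that multiplication by $z_j$ is an isometry on $L^2(\mu)$ (since $|z_j|=1$ on $\T^d$). Once we know that the restriction $M_{z_j}: \mathcal{H} \to \mathcal{H}$ makes sense and is an isometry, its adjoint is a contraction, and $j$-contractivity of the reproducing kernel falls out of a direct computation.

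First I would note that because $\mathcal{H}$ is assumed closed under multiplication by $z_j$ and is a closed subspace of $L^2_\mu(B) \subset L^2(\mu)$, the operator $M_{z_j}f := z_jf$ maps $\mathcal{H}$ to itself, and $\|M_{z_j}f\|_\mu = \|f\|_\mu$ since $|z_j|=1$ on $\T^d$. Thus $M_{z_j}$ is an isometry on $\mathcal{H}$, so its Hilbert-space adjoint $M_{z_j}^*$ on $\mathcal{H}$ satisfies $\|M_{z_j}^*\| \le 1$. Next I would identify $M_{z_j}^*$ on kernel functions: for any $\zeta \in \D^d$ and $f \in \mathcal{H}$,
\[
\ip{f}{M_{z_j}^* K_\zeta}_\mu = \ip{z_j f}{K_\zeta}_\mu = \zeta_j f(\zeta) = \ip{f}{\bar{\zeta_j}K_\zeta}_\mu,
\]
so $M_{z_j}^* K_\zeta = \bar{\zeta_j} K_\zeta$.

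Finally, to verify $j$-contractivity of $K$, pick any finite collection of points $\zeta^{(1)},\dots,\zeta^{(N)} \in \D^d$ and scalars $c_1,\dots,c_N$, and set $g = \sum_k \bar{c_k} K_{\zeta^{(k)}} \in \mathcal{H}$. Then
\[
\sum_{i,k} c_i\bar{c_k} K(\zeta^{(i)},\zeta^{(k)}) = \|g\|_\mu^2,
\]
while using $M_{z_j}^* K_{\zeta^{(k)}} = \bar{\zeta_j^{(k)}}K_{\zeta^{(k)}}$,
\[
\sum_{i,k} c_i\bar{c_k}\, \zeta_j^{(i)}\bar{\zeta_j^{(k)}} K(\zeta^{(i)},\zeta^{(k)}) = \|M_{z_j}^* g\|_\mu^2.
\]
Subtracting gives $\sum_{i,k} c_i\bar{c_k}(1-\zeta_j^{(i)}\bar{\zeta_j^{(k)}})K(\zeta^{(i)},\zeta^{(k)}) = \|g\|^2 - \|M_{z_j}^*g\|^2 \ge 0$ by the contractivity of $M_{z_j}^*$. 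Since the test points and scalars were arbitrary, $(1-z_j\bar{\zeta_j})K(z,\zeta)\ge 0$, as desired.

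There is no serious obstacle here; the only point needing a moment's care is that $M_{z_j}$ genuinely maps $\mathcal{H}$ into $\mathcal{H}$ (rather than merely into $L^2_\mu(B)$), which is exactly the hypothesis, and that the norm on $\mathcal{H}$ is inherited from $L^2(\mu)$ so that isometry in the ambient space transfers to the subspace.
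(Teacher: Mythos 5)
Your proof is correct. The paper does not give a proof here but instead cites Corollary 2.37 of Agler--McCarthy; your argument is precisely the standard proof underlying that cited result, and the key observation that makes it go through without any hypothesis beyond shift-invariance is the one you isolate: since the $\mathcal{H}$-norm is inherited from $L^2(\mu)$ and $|z_j|\equiv 1$ on $\T^d$, the shift $M_{z_j}$ is automatically an \emph{isometry} on $\mathcal{H}$, so $\|M_{z_j}^*\|\le 1$ for free, and the adjoint action $M_{z_j}^*K_\zeta = \bar{\zeta_j}K_\zeta$ converts contractivity of $M_{z_j}^*$ directly into positivity of $(1-z_j\bar{\zeta_j})K(z,\zeta)$.
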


See for example Corollary 2.37 of \cite{AM02}.

\bibliography{schurkernel}

\end{document}